\theoremstyle{plain}
\newtheorem{The}{Theorem}[section]
\newtheorem*{The*}{Theorem}
\newtheorem{Pro}[The]{Proposition}
\newtheorem*{Pro*}{Proposition}
\newtheorem{Lem}[The]{Lemma}
\newtheorem{Cor}[The]{Corollary}
\newtheorem*{Cor*}{Corollary}
\theoremstyle{definition}
\newtheorem*{Def}{Definition}
\theoremstyle{remark}
\newtheorem*{Rem*}{Remark}
\numberwithin{equation}{section}
\DeclareMathOperator{\image}{im}
\DeclareMathOperator{\End}{End}
\DeclareMathOperator{\SO}{SO}
\DeclareMathOperator{\Id}{Id}
\DeclareMathOperator{\mH}{\mathcal H}
\DeclareMathOperator{\mV}{\mathcal V}
\DeclareMathOperator{\mJ}{\mathcal J}
\DeclareMathOperator{\ed}{d}
\DeclareMathOperator{\tr}{tr}
\DeclareMathOperator{\vol}{vol}
\DeclareMathOperator{\dive}{div}
\DeclareMathOperator{\grad}{grad}
\DeclareMathOperator{\del}{\partial}
\newcommand{\R}{\mathbb{R}}
\newcommand{\N}{\mathbb{N}}
\newcommand{\Z}{\mathbb{Z}}
\begin{document}

\title[Harmonic morphisms on conformally flat 3-spheres]{Harmonic morphisms on conformally flat 3-spheres}

\author{Sebastian Heller}

\address{Sebastian Heller\\
  Mathematisches Institut\\
  Universit{\"a}t T{\"u}bingen\\\
  Auf der Morgenstelle 10\\
  72076 T{\"u}bingen\\
  Germany}

\email{heller@mathematik.uni-tuebingen.de}

\subjclass{53C12,53C24,53C43}

\date{\today}

\thanks{Author supported by GRK 870 ''Arithmetic and Geometry'' and SFB/Transregio 71}

\begin{abstract} 
 We show that under some non-degeneracy assumption the only submersive harmonic morphism on a conformally flat $3-$sphere is the Hopf fibration. The proof involves
 an appropriate use the Chern-Simons functional.
 \end{abstract}

\maketitle

\section{Introduction}
\label{sec:intro}
Harmonic morphisms have been the subject of intensive investigations \cite{BW}. They give rise to an overdetermined system of differential equations, namely they are harmonic and horizontally conformal. There exist many rigidity results, for example on space forms harmonic morphisms are classified, see
\cite{BW} for dimension $3$ and \cite{Br} in general.
Moreover in case of fibers of dimension one, there exists a local structure theorem for the metric on the domain. From this one can see that a Riemannian $3-$space of non-constant curvature possesses at most two different foliations which become harmonic morphisms locally, see \cite{BW2} and \cite{BW3}.

In the present article we give the answer to another question concerning harmonic morphisms: Which conformally flat metric on a $3-$sphere gives rise to a 
globally defined harmonic morphism?
We show, under a natural assumption, that the only conformally flat metric
on $S^3$ which posseses a submersive harmonic morphism onto a
surface is the round metric, and that the map must be the Hopf
fibration up to isometries. This result should be compared with a paper  of Pantilie \cite{P} where the case of 
harmonic morphisms on conformally flat domains of dimension $n\geq4$ is 
investigated. As the condition of being 
conformally flat on a space of dimension 
$n\geq4$ is totally different to the case of $3-$dimensional manifolds, the methods therein are different from ours.

The author would like to thank his thesis supervisor Ulrich Pinkall.
\section{Harmonic Morphisms on $3-$Manifolds}
We give a short introduction to harmonic morphisms. 
We refer the reader to the monograph \cite{BW} for a detailed study. We compute the curvature of the $3-$dimensional domain of a submersive harmonic morphism.
\subsection{Harmonic Mappings}
Let $f\colon (P,g)\to (M,h)$ be a smooth map between Riemannian manifolds.
The energy functional of $f$ is given by
$$E(f):=\frac{1}{2}\int_P\parallel\ed f\parallel^2\vol_P.$$
It is a generalization of the energy of real-valued functions and one defines harmonic maps as the critical values of this functional. Both, the functional and the Euler-Lagrange equation have the same shape as for functions: Consider the differential of $f$ as a section 
$$\ed f\in\Gamma(P;T^*P\otimes f^*TM),$$ 
and equip the bundle with the product connection of the Levi-Civita
connection on $T^*P$ and the pullback $f^*\nabla$ of the Levi-Civita
connection on $TM.$ Note that in case of functions, i.e.
$(M,h)=(\R,<,>),$ the product connection on 
$T^*P\otimes T\R\cong T^*P$ equals the Levi-Civita connection on
$T^*P.$
\begin{Pro*}
A map $f\colon (P,g)\to (M,h)$ between Riemannian manifolds is a harmonic map if and only if the tension field 
$$\tau(f):=\tr \nabla\ed f\in\Gamma(f^*TM)$$ vanishes.
\end{Pro*}
A proof can be found in \cite{EW} or \cite{J}. In the case of functions, the tension field
is given by the negative of the Laplacian so both definitions coincide.

There has been much research on harmonic maps. For example the problem of finding a harmonic map in the homotopy class of a
given map. But we are merely interested in a special class of
harmonic maps, which we study by using different methods than usually done for harmonic maps.

\subsection{Harmonic Morphisms}
A harmonic morphism is a map $$f\colon  (P,g)\to (M,h)$$ between Riemannian manifolds, such that for any locally defined harmonic
function $u\colon U\subset M\to\R$ the composition $u\circ f$ is
harmonic on $\pi^{-1}(U)\subset P.$ 

Of course, constant maps are harmonic morphisms. Further holomorphic maps between Riemannian surfaces or isometries are harmonic morphisms, too. The composition of two
harmonic morphisms is again a harmonic morphism. Therefore, in the case of a surface as target space, a map is a
harmonic morphism for a metric on the surface if and only if it is one for
any other metric in the same conformal class. This allows us to
speak about harmonic morphisms into Riemannian surfaces.\\

\begin{Def}
A submersion $\pi\colon P\to M$ between Riemannian manifolds is called conformal if for all $p\in P$
the differential $\ed_p\pi\colon\mH\to T_{\pi(p)}M$ restricted to the horizontal space  $\mH=\ker\ed\pi^{\perp}$ is conformal.
\end{Def}
With this we state the following useful characterization of harmonic morphisms given by Fuglede (\cite{Fu}) and Ishihara (\cite{Is}).

\begin{The*}\label{harhar}
A submersion between Riemannian manifolds is a harmonic morphism if and only if it is harmonic and conformal.
\end{The*}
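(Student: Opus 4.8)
The plan is to prove the Fuglede–Ishihara characterization: a submersion $f\colon(P,g)\to(M,h)$ is a harmonic morphism if and only if it is both harmonic and horizontally conformal.

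The plan is to prove the Fuglede--Ishihara characterization by means of the second-order chain rule for the tension field. For a smooth map $f\colon(P,g)\to(M,h)$ and a locally defined function $u\colon U\subset M\to\R$, a direct computation with the product connection introduced above yields the composition formula
$$\tau(u\circ f)=\ed u(\tau(f))+\tr_g\bigl(\Hess u(\ed f,\ed f)\bigr),$$
where $\Hess u=\nabla\ed u$ is the Hessian of $u$ on $M$. First I would establish this identity, as it pointwise separates the two invariants of $f$ that the theorem concerns: the tension field $\tau(f)$, which is paired against the first derivatives $\ed u$, and the symmetric tensor built from $\ed f$, which is paired against the second derivatives $\Hess u$.

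Fix $p\in P$, set $q=f(p)$, and choose an orthonormal frame $\{e_k\}$ on $P$ together with normal coordinates for $h$ centred at $q$. Writing $A^{ij}=\sum_k\langle\ed f(e_k),\partial_i\rangle\langle\ed f(e_k),\partial_j\rangle$, the formula at $p$ reads $\tau(u\circ f)=\sum_i\partial_iu\,\tau(f)^i+\sum_{i,j}A^{ij}(\Hess u)_{ij}$, and the harmonic-morphism hypothesis requires the left side to vanish whenever $u$ is harmonic, i.e.\ whenever $\tr_h\Hess u=\sum_i(\Hess u)_{ii}=0$ at $q$. I would exploit this in two steps. Taking $u$ linear in the normal coordinates gives $\Hess u(q)=0$ with $\ed u(q)$ an arbitrary covector, and such a $u$ is harmonic; the condition then forces $\ed u(\tau(f))=0$ for every covector, so $\tau(f)(p)=0$ and $f$ is harmonic. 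With $\tau(f)=0$ secured, the residual condition is $\sum_{i,j}A^{ij}(\Hess u)_{ij}=0$ for all harmonic $u$, that is, whenever the symmetric matrix $(\Hess u)_{ij}$ is trace-free. Since a symmetric matrix orthogonal, in the trace pairing, to every trace-free symmetric matrix must be a multiple of the identity, this yields $A^{ij}=\lambda^2\delta^{ij}$ for some $\lambda\ge0$, which is exactly the statement that $\ed_pf$ is conformal on $\mH$. Conversely, if $f$ is harmonic and horizontally conformal, then $\tau(f)=0$ and $A^{ij}=\lambda^2\delta^{ij}$, so the composition formula collapses to $\tau(u\circ f)=\lambda^2\,(\tr_h\Hess u)\circ f$, which vanishes whenever $u$ is harmonic; hence $f$ is a harmonic morphism.

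The step I expect to be the genuine obstacle is justifying that the test functions above exist as \emph{honestly harmonic} functions near $q$, and not merely as functions with the prescribed $2$-jet. The composition identity is pointwise, but the harmonic-morphism property demands that $u\circ f$ be harmonic throughout $f^{-1}(U)$, so $u$ must solve $\tr_h\Hess u=0$ on a full neighbourhood. For the linear case this is immediate. For the second-order case I would prove the auxiliary lemma that, given any trace-free symmetric $S$ on $T_qM$, there is a harmonic $u$ near $q$ with $\ed u(q)=0$ and $\Hess u(q)=S$: one starts from the quadratic polynomial $\tfrac12\sum S_{ij}x^ix^j$ in normal coordinates, whose Laplacian vanishes at $q$ because $\tr_hS=0$ and the Christoffel symbols vanish there, and then corrects it by solving a Poisson equation for a remainder of order $O(|x|^3)$, leaving the $2$-jet at $q$ unchanged. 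This local solvability is precisely the point at which analysis, rather than linear algebra, enters the argument.
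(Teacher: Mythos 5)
The paper itself offers no proof of this statement: it is quoted as the Fuglede--Ishihara theorem with references \cite{Fu}, \cite{Is}. Your outline is in fact the classical argument of Fuglede: the composition law $\tau(u\circ f)=\ed u(\tau(f))+\tr_g\bigl(\Hess u(\ed f,\ed f)\bigr)$, the pairing of harmonic $2$-jets against $(\tau(f),A)$, and the linear-algebra fact that a symmetric matrix orthogonal to all trace-free symmetric matrices is a multiple of the identity. The converse direction as you present it is complete and correct.

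The forward direction, however, has a genuine gap, and it sits exactly where you located it: the existence of \emph{honestly harmonic} test functions with prescribed $2$-jet at $q$. Two specific problems. First, your claim that the linear case ``is immediate'' is false: a function that is linear in normal coordinates has vanishing covariant Hessian and Laplacian at the centre $q$ only; on a neighbourhood it is in general not harmonic (on the round sphere, for instance, normal-coordinate functions are harmonic at no point other than the centre). Since the harmonic-morphism hypothesis can only be tested on functions harmonic on an open set, the linear step needs the same existence lemma as the quadratic step, namely: for every covector $\xi$ and every trace-free symmetric $S$ on $T_qM$ there is a harmonic $u$ near $q$ with $\ed u(q)=\xi$ and $\Hess u(q)=S$. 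Second, your proposed proof of that lemma --- correct the polynomial $P$ by ``solving a Poisson equation for a remainder of order $O(|x|^3)$'' --- is circular as stated: no standard solvability theorem produces a solution of $\Delta r=-\Delta P$ vanishing to third order at a prescribed interior point; a Dirichlet solution on a small ball has an uncontrolled (trace-free) $2$-jet at $q$, and killing that jet is precisely the content of the lemma. The gap is fixable by a perturbation argument: solve the Dirichlet problem for $\Delta r=-\Delta P$ on $B_\epsilon(q)$ with zero boundary values; the maximum principle and scaled Schauder estimates (rescale to the unit ball) give $|\ed r(q)|+|\Hess r(q)|\le C\epsilon\,(|\xi|+|S|)$, so the linear map sending the data $(\xi,S)$ to the $2$-jet at $q$ of the harmonic function $P+r$ is a perturbation of the identity of norm $O(\epsilon)$, hence surjective for small $\epsilon$. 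With that lemma in place (it is Fuglede's key lemma, and it is proved in the monograph \cite{BW} cited by the paper), your argument is complete.
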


This can be reformulated in a more appropriate way for our propose here. In case of
submersions from $3-$manifolds to surfaces the fibers are curves,
and the tension is given by (the projection of) the geodesic curvature.

\begin{The}[\cite{BE}]\label{hclass}
A submersion $\pi$ from a Riemannian $3-$space $(P,g)$ to a
Riemannian surface $(M,[h])$ is a harmonic morphism if and only if it
is conformal and has minimal fibers.
\end{The}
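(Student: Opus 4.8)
The plan is to deduce the statement from the Fuglede--Ishihara characterization quoted above: since $\pi$ is already assumed conformal, it is a harmonic morphism precisely when it is harmonic. Everything therefore reduces to computing the tension field $\tau(\pi)=\tr_g\nabla\ed\pi$ of a horizontally conformal submersion and showing that it vanishes exactly when the fibers are minimal.

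First I would fix $p\in P$ and choose an adapted orthonormal frame $\{V,X_1,X_2\}$, with $V$ spanning the vertical line $\ker\ed\pi$ and $X_1,X_2$ spanning the horizontal plane $\mH$, arranged so that $\ed\pi(X_a)=\lambda E_a$ for an orthonormal frame $E_1,E_2$ on $M$ and the dilation $\lambda$. Writing $\tau(\pi)=\sum_{i}\bigl(\nabla^\pi_{e_i}\ed\pi(e_i)-\ed\pi(\nabla_{e_i}e_i)\bigr)$ over $e_i\in\{V,X_1,X_2\}$, I would split the trace into its vertical term and its two horizontal terms and treat each separately.

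The vertical term is immediate: since $\ed\pi(V)=0$ it reduces to $-\ed\pi(\nabla_V V)$, and because $\ed\pi$ annihilates the vertical direction, only the horizontal component of $\nabla_V V$ survives. This horizontal component is exactly the mean curvature vector $\mu^{\mV}$ of the one-dimensional fiber, so the vertical contribution equals $-\ed\pi(\mu^{\mV})$. The horizontal terms are the crux: differentiating the conformality relation $h(\ed\pi X,\ed\pi Y)=\lambda^2 g(X,Y)$ along the $X_a$ and tracing should show that $\sum_a\bigl(\nabla^\pi_{X_a}\ed\pi(X_a)-\ed\pi(\nabla_{X_a}X_a)\bigr)$ reproduces the tension field of the conformal map induced on the horizontal distribution into $(M,h)$, which is a $(2-n)$-multiple of $\ed\pi(\grad^{\mH}\ln\lambda)$ with $n=\dim M$. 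Here I would use the conformal invariance on the surface target to rescale $h$ and keep the bookkeeping clean.

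The decisive point is that the target has dimension $n=2$, so the coefficient $(2-n)$ vanishes and the entire horizontal contribution drops out --- equivalently, conformal maps of surfaces are automatically harmonic. Hence $\tau(\pi)=-\ed\pi(\mu^{\mV})$. Since $\pi$ is a submersion, $\ed\pi$ is injective on $\mH$ and $\mu^{\mV}$ is horizontal, so $\tau(\pi)=0$ if and only if $\mu^{\mV}=0$, i.e.\ the fibers are minimal. I expect the main obstacle to be precisely the horizontal trace computation: verifying that the dilation terms assemble into the $(2-n)$-multiple and therefore cancel in dimension two, rather than leaving a spurious $\grad^{\mH}\ln\lambda$ contribution.
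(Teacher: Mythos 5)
Your proposal is correct and follows exactly the route the paper takes: the paper gives no independent proof of this theorem but cites \cite{BE} and justifies it by the same observation you elaborate---via the Fuglede--Ishihara characterization, a conformal submersion is a harmonic morphism iff it is harmonic, and for a $3$-manifold fibered over a surface the tension field reduces to (the projection of) the geodesic curvature of the one-dimensional fibers. Your horizontal-trace computation, in which the $(2-n)$ coefficient kills the dilation term because the target has dimension $n=2$, is the standard fundamental equation for horizontally conformal submersions and correctly supplies the details the paper delegates to the reference.
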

\subsection{Curvature of Fibered $3-$Manifolds}
Instead of working with the metric on $P$ for which $\pi$ is a harmonic morphism we change it by a conformal factor such that $\pi$ becomes into a special Riemannian submersion. We first collect some basic facts of Riemannian submersions,
for details see \cite{B}.

We will mainly discuss the case on hand: Let $\pi\colon (P,g)\to(M,h)$ be a Riemannian submersion between oriented Riemannian manifolds
of dimension $3$ and $2.$
The geometry of the total space of a Riemannian submersion is determined by the geometry of its fibers, of its base space and of the horizontal distribution $\mH:=\ker\ed\pi^{\perp}.$ Let $\nabla$ be the Levi-Civita connection on $P,$ and let 
$$T\in\Gamma(P;\mV)\subset\Gamma(P;TP)$$
be the unit length vector field in positive fiber direction. The geodesic curvature of the fiber is given by $\nabla_TT.$

To relate the geometry of the base to the one of the total space, we will use orthonormal vector fields 
$$A,\ B\in\Gamma(U;TM)$$ defined on an open subset $U\subset M$ together with a function $$\lambda\colon U\subset M\to\R$$ such that
$e^{\lambda}(A+iB)$ is a holomorphic vector field on the surface. This condition is equivalent to 
$[e^\lambda A,e^\lambda B]=0.$ Note that the Gaussian curvature of the surface is given by
$K=\Delta\lambda.$
We denote by $$\hat A,\hat B\in\Gamma(\pi^{-1}(U),\mathcal H)$$ the horizontal lifts of $A$ and $B,$ i.e. the unique horizontal vector fields which are $\pi-$related to $A$ and $B.$ 

The curvature of the horizontal distribution, i.e. the obstruction of the horizontal bundle being integrable, can be identified
with a real valued $2-$form 
$$\Omega\in\Omega^2(P);\ (X,Y)\mapsto -g([\pi^{\mH}(X),\pi^{\mH}(Y)],T),$$
where $\pi^{\mH}$ is the orthogonal projection onto the horizontal space 
$\mH.$ This form is horizontal, and because the base is of dimension $2$ we can use the metric and the orientation to define the curvature function $H$ of the horizontal distribution by the formula
\begin{equation}\label{defH}
\Omega=H\pi^*\vol_M,
\end{equation}
where $\vol_M$ is the volume form of the surface. In terms of the vector fields $\hat A$ and $\hat B$ the function $H$ is given by $H=-g([\hat A,\hat B],T).$

 Let $$\mJ\in\End(TP);\ X\mapsto T\times X$$ be the CR structure of the Riemannian submersion $\pi,$ i.e. 
$\mJ(\hat A)=\hat B,$
$\mJ(\hat B)=-\hat A$ and $\mJ(T)=0.$ With these notions the Levi-Civita connection is given by
\begin{Pro}\label{lc2}
The Levi-Civita connection on a Riemannian fibered $3-$space is given 
in terms of the vector fields $T,\hat A,\hat B$ by
\begin{equation*}
\begin{split}
\nabla T&=\frac{1}{2}H\mJ +\nabla_TT\otimes g(.,T)\\
\nabla\hat A&=\hat B\otimes g(.,\frac{1}{2}HT+\mJ\grad\lambda)
+T\otimes g(.,\frac{1}{2}H\hat B-g(\nabla_TT,\hat A)T)\\
\nabla\hat B&=-\hat A\otimes g(.,\frac{1}{2}HT+\mJ\grad\lambda)
-T\otimes g(.,\frac{1}{2}H\hat A+g(\nabla_TT,\hat B)T).\\
\end{split}
\end{equation*}
\end{Pro}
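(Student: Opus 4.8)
The plan is to read the connection straight off the Koszul formula applied to the orthonormal frame $T,\hat A,\hat B$. Because this frame is orthonormal, every inner product $g(e_i,e_j)$ is constant, so the three ``derivative of the metric'' terms in the Koszul formula drop out and one is left with
\[
2g(\nabla_X Y,Z)=g([X,Y],Z)-g([X,Z],Y)-g([Y,Z],X),\qquad X,Y,Z\in\{T,\hat A,\hat B\}.
\]
Thus the whole computation reduces to finding the three Lie brackets $[\hat A,\hat B]$, $[T,\hat A]$, $[T,\hat B]$. Once these are known, each coefficient $g(\nabla_{e_i}e_j,e_k)$ is a combination of bracket components, and the nine resulting vectors $\nabla_{e_i}e_j$ can be reassembled into the tensorial expressions for $\nabla T,\nabla\hat A,\nabla\hat B$ in the statement.

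Next I would compute the brackets. Since $\hat A$ is $\pi$-related to $A$ while $T$ is $\pi$-related to $0$, the bracket $[\hat A,T]$ is $\pi$-related to $[A,0]=0$, hence vertical, so $[\hat A,T]=g([\hat A,T],T)\,T$. Differentiating $g(\hat A,T)=0$ and $g(T,T)=1$ along $T$ gives $g([\hat A,T],T)=g(\nabla_T T,\hat A)$, and likewise $[\hat B,T]=g(\nabla_T T,\hat B)\,T$; this is where the geodesic curvature $\nabla_T T$ enters. For the horizontal pair, the vertical part is $g([\hat A,\hat B],T)\,T=-HT$ by the very definition of $H$, and the horizontal part is the lift of $[A,B]$ since $\pi^{\mH}[\hat A,\hat B]=\widehat{[A,B]}$. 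Expanding the holomorphicity hypothesis $[e^\lambda A,e^\lambda B]=0$ yields $[A,B]=(B\lambda)A-(A\lambda)B=-\mJ\grad\lambda$, so that $[\hat A,\hat B]=-\mJ\grad\lambda-HT$, where $\grad\lambda$ is the (horizontal) gradient of the fibre-constant function $\lambda$.

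Feeding these three brackets into the reduced Koszul formula then produces every connection coefficient directly; for instance one finds $\nabla_{\hat A}T=\frac{1}{2}H\hat B$ and $\nabla_T\hat A=-g(\nabla_T T,\hat A)\,T+\frac{1}{2}H\hat B$, and the systematic pattern of the $\frac{1}{2}H$ terms is exactly $\frac{1}{2}H\mJ$, while the $\grad\lambda$ terms come entirely from the horizontal part of $[\hat A,\hat B]$. Collecting the nine vectors into $(0,2)$-tensors gives the claimed formulas. I expect the main obstacle to be purely the middle step: correctly translating the holomorphicity condition $[e^\lambda A,e^\lambda B]=0$ into $[A,B]=-\mJ\grad\lambda$ and lifting it, since everything else is sign-careful bookkeeping. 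As a consistency check one can verify metric compatibility (the coefficient arrays are skew under $j\leftrightarrow k$, because $g(e_j,e_k)$ is constant) and torsion-freeness, $\nabla_{e_i}e_j-\nabla_{e_j}e_i=[e_i,e_j]$, which reproduces the three brackets computed above.
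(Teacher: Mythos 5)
Your proposal is correct and follows exactly the route the paper indicates: the paper's proof is a one-line remark that the formulas are a special case of O'Neill's formulas for Riemannian submersions obtained from the Koszul formula, which is precisely the computation you carry out (Koszul formula for the orthonormal frame $T,\hat A,\hat B$, reduced to the three brackets $[\hat A,T]$, $[\hat B,T]$, $[\hat A,\hat B]$, with the holomorphicity condition $[e^\lambda A,e^\lambda B]=0$ yielding $[A,B]=-\mJ\grad\lambda$). Your bracket computations and the resulting coefficients all check out against the stated formulas.
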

The computation above is a special case of the formulas of O'Neil for Riemannian submersions, see \cite{B}, and 
 just involves the use of the Kozul formula for the Levi-Civita connection.

It is well-known that in dimension $3$ the Riemannian curvature tensor $R$ is entirely given by the Ricci tensor.
For details of the decomposition of the Riemannian curvature tensor  see \cite{GHL}. For our purpose it is useful to work with the so-called Schouten tensor
$$S:=Ric-\frac{1}{4}scal \Id\in\End(TP)$$
instead of the Ricci tensor. For example,
$$R=-S\cdot g,$$
where $\cdot$ is the Kulkarni-Nomizu product, and where we consider all tensors to be bilinear or multilinear forms, respectively.

We only state the formulas for the Schouten tensor, which can be computed easily.

\begin{Pro}\label{s2}
The Schouten tensor of a Riemannian fibered $3-$manifold is given by
\begin{equation*}
\begin{split}
S(T,T)=&(-\frac{1}{2}K+\frac{5}{8}H^2+\frac{1}{2}\dive\nabla_TT),\\
S(T,X)=&g(\mJ(H\nabla_TT-\frac{1}{2}\grad^hH),X),\\
S(X,Y)=&(-\frac{3}{8}H^2+\frac{1}{2}K)g(X,Y)+\frac{1}{2}g(\nabla_X\mJ\nabla_TT,\mJ Y)\\
&+\frac{1}{2}g(\nabla_{\mJ X}\mJ\nabla_TT,Y)
+\frac{1}{2}g(\nabla_TT,\mJ X)g(\nabla_TT,\mJ Y)\\&-\frac{1}{2}g(\nabla_TT,X)g(\nabla_TT,Y),\\
\end{split}
\end{equation*}
where $X,Y$ are horizontal vectors, and $T,$ $H$ and $K$ are as above.
\end{Pro}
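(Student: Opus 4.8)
The plan is to read off the full Riemann curvature tensor directly from the covariant derivatives recorded in Proposition~\ref{lc2}, to trace this down to the Ricci tensor and the scalar curvature, and finally to subtract $\tfrac14 scal\,\Id$ to obtain $S$. Throughout I would work in the orthonormal frame $\{T,\hat A,\hat B\}$, so that it suffices to evaluate everything on the three frame vectors and extend by bilinearity.

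First I would extract from Proposition~\ref{lc2} the nine covariant derivatives $\nabla_X Y$ with $X,Y\in\{T,\hat A,\hat B\}$; for instance $\nabla_{\hat A}T=\tfrac12 H\hat B$, $\nabla_{\hat B}T=-\tfrac12 H\hat A$, and $\nabla_T\hat A=\tfrac12 H\hat B-g(\nabla_TT,\hat A)T$. Since the Levi-Civita connection is torsion free, the Lie brackets follow from $[X,Y]=\nabla_XY-\nabla_YX$; in particular the vertical part of $[\hat A,\hat B]$ reproduces the defining relation $H=-g([\hat A,\hat B],T)$, while the holomorphicity condition $[e^\lambda A,e^\lambda B]=0$ pins down the horizontal parts of the brackets in terms of $\grad\lambda$. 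These bracket identities are exactly what will eventually produce the Gaussian curvature $K=\Delta\lambda$ of the base in the curvature of the total space.

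With these data in hand I would compute $R(X,Y)Z=\nabla_X\nabla_YZ-\nabla_Y\nabla_XZ-\nabla_{[X,Y]}Z$ for the coordinate planes. In dimension three it is enough to know the sectional curvatures of the planes spanned by pairs from $\{T,\hat A,\hat B\}$ together with the mixed components, since $Ric(e_i,e_i)=\sum_{j\neq i}g(R(e_j,e_i)e_i,e_j)$, each off-diagonal entry $Ric(e_i,e_j)$ is a single curvature component, and $scal$ is twice the sum of the three sectional curvatures. Assembling $Ric$ this way and subtracting $\tfrac14 scal\,\Id$ then yields the three blocks $S(T,T)$, $S(T,X)$, $S(X,Y)$; the second covariant derivatives of $T$ appearing in $\nabla_X\nabla_YT$ are precisely what generate the $\tfrac12 g(\nabla_X\mJ\nabla_TT,\mJ Y)$ terms and the $\tfrac12\dive\nabla_TT$ in the statement.

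The main obstacle will be bookkeeping rather than any conceptual difficulty: the second derivatives differentiate the coefficient functions $H$ and $\lambda$ and the geodesic curvature $\nabla_TT$, so one must track $\grad^hH$, the horizontal Hessian of $\lambda$ (which has to be recombined into $\Delta\lambda=K$), and the many contractions with $\mJ$, being especially careful with the signs forced by the orientation convention $\mJ X=T\times X$. Recognizing the Laplacian of $\lambda$ inside the horizontal second derivatives, and verifying that all first-order terms in $\grad\lambda$ cancel so that only $K$ survives in the base-curvature contribution, is the delicate point at which a sign error would most easily slip in.
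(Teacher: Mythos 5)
The paper offers no proof of Proposition \ref{s2} at all---it merely remarks that the formulas ``can be computed easily''---and your outline is precisely that intended computation: read the covariant derivatives off Proposition \ref{lc2}, form $R(X,Y)Z=\nabla_X\nabla_YZ-\nabla_Y\nabla_XZ-\nabla_{[X,Y]}Z$, trace to $Ric$ and $scal$ in the orthonormal frame $(T,\hat A,\hat B)$, and subtract $\frac{1}{4}scal\,\Id$. Your intermediate assertions (the individual derivatives such as $\nabla_{\hat A}T=\frac{1}{2}H\hat B$, the trace identities valid in dimension three, and the use of $[e^\lambda A,e^\lambda B]=0$ to reduce the second derivatives of $\lambda$ to $K=\Delta\lambda$) are all correct, so the approach goes through, with only the unexecuted bookkeeping separating it from a complete proof---the same gap the paper itself leaves.
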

In case of a Riemannian submersion which is also a harmonic morphism,
the geodesic curvature of the fibers $\nabla_TT$ vanishes. Moreover the function $H$ will be constant fiber-wise. For a more detailed study see \cite{He2}.
The situation we are considering here is different: We only know that there exists a conformal equivalent metric such that $\pi$ is a harmonic morphism.
This also gives a constraint on $\nabla_TT.$ In the following we use the horizontal gradient
$$\grad^h:=\pi^{\mH}\circ\grad.$$
Then
\begin{Lem}\label{spme}
Let $\pi\colon (P^3,\tilde g)\to (M^2,[h])$ be a submersive harmonic
morphism such that its horizontal distribution has nowhere vanishing curvature. By a conformal change $g$ of the metric on $P$ and with a metric $h\in[h]$ on $M,$ $\pi$ is a Riemannian submersion, such that the mean curvature of the fibers is given by $$\nabla_TT=-\grad^h\log H$$ with respect to the new metric. For this metric, the function 
$$p\in M\mapsto\int_{\pi^{-1}(p)}\frac{1}{H}g(.,T)$$
is constant on the surface. We fix that constant to be $\pm\pi$ where the sign only depends on the orientations, then $g$ and $h$ are unique. 
\end{Lem}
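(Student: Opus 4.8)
The plan is to realize the required metric $g$ inside the conformal class $[\tilde g]$ and to produce the normalization in four stages: first fix a Riemannian-submersion gauge and parametrize the remaining freedom, then impose the mean-curvature relation, then prove constancy of the fibre integral, and finally use the residual scaling to pin down $g$ and $h$.

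By Theorem \ref{hclass} the harmonic morphism $\pi\colon(P,\tilde g)\to(M,[h])$ is horizontally conformal, say with dilation $\Lambda$ onto a fixed representative $(M,h_0)$, and has geodesic fibres. Setting $g_0:=\Lambda^2\tilde g$ makes $\pi\colon(P,g_0)\to(M,h_0)$ a Riemannian submersion, and I claim every Riemannian-submersion representative in $[\tilde g]$ lying over $[h]$ is then of the form $g=e^{2(v\circ\pi)}g_0$, $h=e^{2v}h_0$ for some $v\in C^\infty(M)$; this settles the submersion requirement and exhibits the remaining freedom as the choice of $v$. A routine Koszul computation, using that the fibres are $\tilde g$-geodesics, gives $\nabla_TT=-\grad^h\log\Lambda$ for $g_0$, while from the transformation $\Omega=\Lambda\,\tilde\Omega$ of the integrability form under $g_0=\Lambda^2\tilde g$ one reads off $H_{g_0}=\Lambda\,\tilde H$, where $\tilde H$ is the horizontal-curvature function of $\tilde g$.

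Next I would compute how $\nabla_TT$ and $H$ transform under the basic conformal change $g=e^{2(v\circ\pi)}g_0$: the mean curvature acquires the extra term $-\grad^h(v\circ\pi)$ and $H$ scales by $e^{-v\circ\pi}$. Substituting the expressions from the previous step, the desired identity $\nabla_TT=-\grad^h\log H$ collapses, after the conformal factors cancel, to the single first-order equation $\grad^h\log\tilde H=2\,\grad^h(v\circ\pi)$ on $P$. Thus the whole construction reduces to solving this equation for $v$, and the hard part is exactly here: one must show that the horizontal field $\grad^h\log\tilde H$ is basic, i.e.\ constant along the fibres, and that its projection is a gradient field on $M$, so that it descends to a closed, hence (on the simply connected surface) exact, $1$-form determining $v$. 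This is where the harmonic-morphism equations genuinely enter, beyond the formal algebra of conformal change; I expect to extract it from the structure equations of Proposition \ref{lc2} together with the Schouten formulas of Proposition \ref{s2}, by differentiating $\tilde H$ along and around the fibres and invoking the geodesic-fibre condition. Granting solvability, $v$ is then determined up to an additive constant.

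With the mean-curvature relation secured, constancy of the fibre integral is a clean Stokes argument. Writing $\alpha:=g(\cdot,T)$ and $\omega:=\tfrac1H\alpha$, Proposition \ref{lc2} yields
\[
d\alpha=g(\nabla_TT,\hat A)\,g(\cdot,T)\wedge g(\cdot,\hat A)+g(\nabla_TT,\hat B)\,g(\cdot,T)\wedge g(\cdot,\hat B)+H\,\pi^*\vol_M ,
\]
and then the relation, which reads $g(\nabla_TT,\hat A)=-\hat A\log H$ and $g(\nabla_TT,\hat B)=-\hat B\log H$, makes the mixed terms in $d\omega=-\tfrac1{H^2}dH\wedge\alpha+\tfrac1H d\alpha$ cancel exactly, leaving $d\omega=\pi^*\vol_M$. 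Since the flow of a horizontal lift $\hat A$ carries fibres to fibres and $\iota_{\hat A}\omega=0$, differentiating $F(p):=\int_{\pi^{-1}(p)}\omega$ along $A$ gives $A\cdot F=\int_{\pi^{-1}(p)}\iota_{\hat A}d\omega=\int_{\pi^{-1}(p)}\iota_{\hat A}\pi^*\vol_M=0$, the last integrand being horizontal; hence $F$ is constant on the connected surface. Finally I would exploit the leftover constant in $v$: replacing $v$ by $v+c$ scales $(g,h)\mapsto(e^{2c}g,e^{2c}h)$ and multiplies $F$ by $e^{2c}$. Because $H$ is nowhere zero on the connected $P$ it has a fixed sign, so $F$ is nonzero with sign governed by the orientations, and choosing $c$ so that $F=\pm\pi$ determines the constant, and with it the pair $(g,h)$, uniquely.
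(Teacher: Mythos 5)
Your reduction is set up correctly --- the gauge--fixing $g_0=\Lambda^2\tilde g$, the parametrization $g=e^{2(v\circ\pi)}g_0$, $h=e^{2v}h_0$, and the algebra collapsing the desired identity to $\grad^h\log\tilde H=2\,\grad^h(v\circ\pi)$ all check out --- but the proof has a genuine gap at exactly the point you flag as ``the hard part'': you never prove this equation is solvable, you only grant it. That solvability is not a technical remainder; it is the actual content of the lemma, the one place where the harmonic-morphism hypothesis enters beyond the formal algebra of conformal changes. Note moreover that since the horizontal distribution has nowhere vanishing curvature it is bracket-generating, so on the connected total space any function with vanishing horizontal gradient is constant; hence your equation is solvable if and only if $T\cdot\log\tilde H=0$, i.e.\ if and only if $\tilde H=H_{g_0}/\Lambda$ is constant along the fibres, and then $v=\tfrac12\log\tilde H$ descends. (In particular your worry about ``closed, hence exact on the simply connected surface'' is beside the point --- and $M$ is not assumed simply connected in the lemma anyway.) What is missing is precisely the paper's short computation: writing $\nabla_TT=\grad^h\lambda$ in the $g_0$-gauge (your $\lambda=-\log\Lambda$), one gets $[\hat A,T]=(\hat A\cdot\lambda)\,T$ and $[\hat B,T]=(\hat B\cdot\lambda)\,T$, and the Jacobi identity for $\hat A,\hat B,T$, combined with $H=-g([\hat A,\hat B],T)$, yields $T\cdot\lambda=-T\cdot\log H$, i.e.\ $\lambda+\log H$ is basic. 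This two-line commutator argument is what you must supply; the plan of ``differentiating $\tilde H$ along and around the fibres'' using Propositions \ref{lc2} and \ref{s2} is a hope, not a proof.

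The remainder of your argument is sound and in one place genuinely different from the paper: your Stokes computation $\ed\bigl(\tfrac{1}{H}\,g(\cdot,T)\bigr)=\pi^*\vol_M$, giving constancy of the fibre integral by differentiating along horizontal lifts, is correct and replaces the paper's argument that the $\tilde g$-geodesic fibres all have the same length. (The paper uses that equal-length fact together with the normalization to kill the basic function $f$ in $\lambda=-\log H+f\circ\pi$, whereas you would kill the corresponding additive constant in $v$ by the normalization at the very end; both orderings work.) But as written, the central implication --- that the harmonic-morphism condition forces $\nabla_TT=-\grad^h\log H$ in some gauge --- is assumed rather than proved, so the proposal does not yet establish the lemma.
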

\begin{proof}
Let $g$ and $h$ be metrics in the given conformal classes such that $\pi$ is a Riemannian submersion. Note that these metrics are unique up to the multiplication by the same function defined on $M.$ As the metric changes by the factor $e^{2\lambda}\colon M\to\R,$ the curvature function $H$ changes by the factor $e^{-\lambda}.$ By changing the orientation 
either on the total space or on the surface we can always assume $H>0.$ 
Hence there is an unique choice of the metrics $g$ and $h$ such that 
$$\int_{\pi^{-1}}\frac{1}{H}=\pi.$$

It remains to show that for this choice of $g$ and $h$ the geodesic curvature of the fibers is 
given by $\nabla_TT=-\grad^h\log H.$  In \ref{hclass} we have seen that by changing the metric $g$ in a suitable way the mean curvature of the fibers vanishes. Using the formula for the Levi-Civita connection for a conformal change of the metric we have that this is equivalent to the existence of a function $\lambda\colon P\to\R$ such that $\nabla_TT=\grad\lambda.$\\

Let $\hat A,\hat B$ be the horizontal lifts of positive oriented orthonormal basis fields 
$A,B$ on the surface. As $\hat A$ is a horizontal lift, the commutator $[\hat A,T]$ is vertical and  we get
$$[\hat A,T]=\nabla_{\hat A}T-\nabla_T\hat A=-g(\nabla_T\hat A,T)T
=g(\hat A,\nabla_TT)=\hat A\cdot\lambda T,$$
and similarly  $[\hat B,T]=\hat B\cdot\lambda T.$ By definition, we have 
$H=-g(T,[\hat A,\hat B]).$ The Jacobi identity for commutators of
the vector fields  $\hat A,\hat B, T$ yields  
$$T\cdot\lambda=-T\cdot\log H.$$
Hence, $\lambda=-\log H+f\circ\pi, $ where $f$ is some function defined on the surface. If we change the metric $g$ by the factor $e^{2\lambda},$ the fibers become geodesics, thus all of them
have the same length with respect to the new metric.
 We obtain
\begin{equation*}
\begin{split}
const&=\int_{\pi^{-1}}\tilde g(.,\tilde T)=\int_{\pi^{-1}}e^{\lambda} g(.,T)\\
&=\int_{\pi^{-1}}\frac{e^f}{H} g(.,T)=e^f\pi.
\end{split}
\end{equation*}
Thus $f$ must be constant and consequently 
$\nabla_TT=-\grad^h\log H.$
\end{proof}
Of course, with of $\nabla_TT=-\grad^h\log H$ the formulas for the Levi-Civita connection and the Schouten tensor simplify. We do not state them here, but we will use these formulas later, see chapter \ref{ls}.

\section{The Chern-Simons Invariant and Conformally Flat $3-$manifolds }\label{cs}
Our proof of the rigidity theorem \ref{uniquen} uses a
global invariant of conformal $3-$manifolds. In their paper, \cite{CS}, Chern and Simons
introduced a geometric invariant of connections. Their theory plays an important role in the topology and knot theory of $3-$manifolds, since Witten has shown its connection to the Jones polynomial. 
We will only work with the Chern-Simons functional for Levi-Civita
connections here, see \cite{Ch}. We shortly describe the geometric significance of the Chern-Simons functional in conformal geometry. There is a related obstruction to a metric to be conformally flat. We compute this for our case on hand.

\subsection{The Chern-Simons Functional}\label{csf}
We first recall some formulas for the Levi-Civita connection and the Riemannian curvature in terms of frames.
Consider a locally defined section $s\in\Gamma(U;O(P))$
of the orthonormal frame bundle $O(P)\to P$ of a Riemannian space $(P,g).$ The vector fields $X_i=s(e_i)$ define an orthonormal basis of $T_pM$ for all 
$p\in U\subset P.$ Consider the dual $1-$forms $\theta_i=g(.,X_i).$ Their collection $(\theta_i)$ is the canonical form $\theta_s$ along $s\in\Gamma(O(P)).$ Cartan's method of moving frames gives us a
skew symmetric matrix of $1-$forms $(\omega_{ij})$ defined by the formula
$$\ed\theta_j=-\sum_i\omega_{ij}\wedge\theta_i.$$
They are related to the Levi-Civita connection form 
$\omega\in\Omega^1(O(P),\mathfrak{so}(n))$ by
$$s^*\omega=(\omega_{ij})\in\Omega^1(U,\mathfrak{so}(n)).$$  
The $\omega_{ij}$ can be obtained from the covariant derivatives of 
the $X_j$ via
$$\nabla X_j=\sum_i\omega_{ij}\otimes X_i.$$
The $\mathfrak{so}(n)-$valued curvature $2-$form can be computed from 
$s^*\Omega=(\Omega_{ij})$ with
\begin{equation}\label{omegar}
\Omega_{ij}=\ed\omega_{ij}+\sum_k\omega_{ik}\wedge\omega_{kj}
=\frac{1}{2}\sum_{k,l} R(X_k,X_l,X_i,X_j)\theta^k\wedge\theta^l,
\end{equation}
where $R$ is the Riemannian curvature tensor.

Let $P$ always be a compact oriented $3-$manifold with trivial tangent bundle. 
We are going to use the bundle $\SO(P)\to P$
of oriented orthonormal frames instead of $O(P)\to P.$
\begin{The*}
Let $g$ be a metric on $P$ and 
$\omega\in\Omega^1(\SO(P),\mathfrak{so}(3))$ be the Levi-Civita
connection. For any section $s\colon P\to\SO(P)$ we set
\begin{equation}
\begin{split}
CS(P,g,s):=&\frac{1}{8\pi^2}\int_Ptr(-\frac{1}{2}s^{*}\omega\wedge s^{*}
\ed\omega-\frac{1}{3}s^{*}\omega\wedge s^{*}\omega\wedge s^{*}\omega)\\
=&\frac{1}{8\pi^2}\int_P\tr(-\frac{1}{2}s^{*}\omega\wedge s^{*}\Omega
+\frac{1}{6}s^{*}\omega\wedge s^{*}\omega\wedge s^{*}\omega).
\end{split}
\end{equation}
The functional only depends on the conformal class of $g$ and
on the homotopy type of $s.$ 
Consequently, the Chern-Simons functional
\begin{equation}
CS((P,[g])):=CS(P,g,s)\mod\Z\in\R/Z
\end{equation}
is a conformal invariant.
\end{The*}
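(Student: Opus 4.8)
The statement contains two assertions: that the real number $CS(P,g,s)$ is unchanged under a conformal change of $g$ and under a homotopy of $s$, and that an arbitrary change of $s$ alters it only by an integer. I would derive both from the general behaviour of the Chern--Simons integrand under variations and gauge transformations. The starting point is the first-variation formula: for any smooth family $A_t\in\Omega^1(P,\mathfrak{so}(3))$ with curvature $\Omega_t=\ed A_t+A_t\wedge A_t$, a direct computation using the cyclicity of the trace gives
\[
\tfrac{\ed}{\ed t}\,\tr\!\left(-\tfrac12 A_t\wedge \ed A_t-\tfrac13 A_t\wedge A_t\wedge A_t\right)=-\tr(\dot A_t\wedge\Omega_t)+\ed\beta_t
\]
for some $2$-form $\beta_t$. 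Since $P$ is closed, the exact term integrates to zero, so $\frac{\ed}{\ed t}CS=-\frac{1}{8\pi^2}\int_P\tr(\dot A_t\wedge\Omega_t)$.

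For conformal invariance I would take $g_t=e^{2tf}g$ and let $A_t$ be the Levi--Civita connection forms of $g_t$ with respect to the adapted orthonormal coframe $e^{tf}\theta_i$; this is a smooth family of $\mathfrak{so}(3)$-valued $1$-forms on $P$, and the underlying sections stay in one homotopy class. Solving the structure equations $\ed\tilde\theta_i=-\sum_j\tilde\omega_{ij}\wedge\tilde\theta_j$ for the rescaled coframe yields, with $f_i=X_i f$, the variation $(\dot A)_{ij}=f_j\theta_i-f_i\theta_j$. Pairing against the curvature and invoking the first Bianchi identity $\sum_j\Omega_{ij}\wedge\theta_j=0$ (valid because the Levi--Civita connection is torsion-free, obtained by differentiating the structure equation \eqref{omegar}) gives
\[
\tr(\dot A\wedge\Omega)=\sum_{i,j}(f_j\theta_i-f_i\theta_j)\wedge\Omega_{ji}
=\sum_j f_j\Big(\sum_i\theta_i\wedge\Omega_{ji}\Big)-\sum_i f_i\Big(\sum_j\theta_j\wedge\Omega_{ji}\Big)=0
\]
pointwise. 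The identical cancellation at every $t$ shows $\frac{\ed}{\ed t}CS\equiv 0$, hence $CS(P,e^{2f}g,s)=CS(P,g,s)$ exactly.

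For the dependence on $s$, any two sections differ by a map $\gamma\colon P\to\SO(3)$, and the connection forms transform as $A'=\gamma^{-1}A\gamma+\gamma^{-1}\ed\gamma$. The standard gauge-transformation law for the Chern--Simons integrand then gives
\[
CS(P,g,s')-CS(P,g,s)=\frac{1}{8\pi^2}\int_P\Big(\ed(\cdots)+\tfrac13\tr\big((\gamma^{-1}\ed\gamma)^3\big)\Big),
\]
where the exact part again integrates to zero. The normalizing constant $8\pi^2$ is chosen precisely so that the remaining term equals the degree of $\gamma$ as a map into $\SO(3)$, an integer. Thus the difference lies in $\Z$, so $CS$ is independent of $s$ modulo $\Z$; and if $s'$ is homotopic to $s$ the map $\gamma$ is null-homotopic, its degree vanishes, and the two real values coincide. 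Together with conformal invariance this is exactly the claimed statement.

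The main obstacle is the middle step: correctly deriving the conformal variation $(\dot A)_{ij}=f_j\theta_i-f_i\theta_j$ from the structure equations and keeping the $\ed f$-bookkeeping straight, so that the surviving term pairs to zero against the curvature through Bianchi. The Bianchi cancellation is the conceptual heart of the argument, since it is what upgrades the usual invariance of the Chern--Simons class \emph{modulo} $\Z$ to exact invariance of the real-valued functional under conformal change. The other delicate point is pinning down the normalization so that the $\tr((\gamma^{-1}\ed\gamma)^3)$ term is genuinely integral for maps into $\SO(3)$.
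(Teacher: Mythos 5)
The paper gives no proof of this theorem at all---it simply points to \cite{CS}---and your proposal correctly reconstructs exactly the classical argument of that reference: the first-variation formula $\frac{\ed}{\ed t}CS=-\frac{1}{8\pi^2}\int_P\tr(\dot A_t\wedge\Omega_t)$, the conformal variation $(\dot A)_{ij}=f_j\theta_i-f_i\theta_j$ killed pointwise by the first Bianchi identity, and the gauge-transformation law reducing the change of section to the WZW term. The single fact you assert rather than verify---that the paper's normalization makes $\frac{1}{48\pi^2}\int_P\tr\bigl((\gamma^{-1}\ed\gamma)^3\bigr)$ equal to $\pm\deg\gamma\in\Z$ for $\gamma\colon P\to\SO(3)$---does hold: the vector-representation trace on $\mathfrak{so}(3)\cong\mathfrak{su}(2)$ is $4$ times the fundamental $\mathfrak{su}(2)$-trace and $\SU(2)\to\SO(3)$ is a double cover, so $\int_{\SO(3)}\tr\bigl((g^{-1}\ed g)^3\bigr)=\mp48\pi^2$ and the change of section is an exact integer, so there is no gap.
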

A proof can be found in \cite{CS}. We will use this result to compute some useful integrals explicitly.

\subsection{Conformally Flat $3-$Manifolds}
A Riemannian manifold $(P,g)$ is conformally flat if there exists a local conformal diffeomorphism into the sphere equipped with its round metric around each point. This is equivalent to the existence of a locally defined function $\lambda$ such that $e^{2\lambda}g$ is flat, see
\cite {KP} or \cite{HJ} for more details.

In case of dimension $2,$ every metric is conformally flat. This is based on the fact that a metric together with an orientation give rise to an almost complex structure. For dimensional reasons this is in fact a complex structure. Therefore there exists holomorphic charts, which are of course conformal.

There exists metrics which are not conformally flat. For dimensions $n\geq4$ being conformally flat is equivalent to the vanishing of the Weyl tensor $W,$ which is the reminder in the general curvature decomposition $$R=-S\cdot g+W.$$ The condition in dimension $3$ is of a higher order: Consider the Schouten tensor $S\in\End(TP)$ as a $TP-$valued $1-$form on $P.$ The Levi-Civita connection on $P$ gives rise to the absolute exterior derivative 
$$d^\nabla\colon\Omega^k(P;TP)\to\Omega^{k+1}(P;TP).$$
Then a metric is conformally flat if and only if 
\begin{equation*}
d^\nabla S=0.
\end{equation*}
It turns out that this is exactly the Euler-Lagrange equation for the Chern-Simons functional:
\begin{The}[\cite{CS}]
The critical values $CS(P,[g])$ of the Chern-Simons functional are exactly
the conformally flat spaces $(P,[g]).$
\end{The}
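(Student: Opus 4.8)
The plan is to regard $CS$ as a functional on conformal structures and to compute its first variation; the Euler--Lagrange tensor will turn out to be (a multiple of) the Cotton tensor, which in dimension three is exactly $d^\nabla S$, so that the critical points coincide with the conformally flat metrics.

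First I would record the infinitesimal behaviour of the integrand. Writing $A=s^*\omega$ and $s^*\Omega=\ed A+A\wedge A$ for the pulled back connection and its curvature, the classical variational identity for the transgression form gives
\[
\delta\,\tr\big(-\tfrac12\,A\wedge \ed A-\tfrac13\,A\wedge A\wedge A\big)
=-\tr(\delta A\wedge s^*\Omega)+\ed\,\beta
\]
for a suitable $2$--form $\beta$, where $\delta$ is the derivative along a one--parameter family of metrics. Since $P$ is closed, Stokes' theorem annihilates the exact term and
\[
\delta\,CS(P,g,s)=-\frac{1}{8\pi^2}\int_P\tr(\delta A\wedge s^*\Omega).
\]
The task is thereby reduced to identifying $\delta A$, the variation of the Levi--Civita connection, and pairing it against the curvature.

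Next I would make this variation explicit. Because $CS(P,[g])$ depends only on the conformal class, it is enough to vary by a trace--free symmetric $2$--tensor $h=\delta g$: the pure trace (conformal) directions leave the functional fixed and so contribute nothing, which is the infinitesimal form of the conformal invariance recalled above. Choosing a smoothly varying oriented orthonormal coframe for $g_t$, the quantity $\delta A=(\delta\omega_{ij})$ is a well defined $\mathfrak{so}(3)$--valued $1$--form assembled algebraically from $\nabla h$, the contributions of the changing Christoffel symbols and of the changing frame combining into an antisymmetric expression. Substituting the three--dimensional identity $R=-S\cdot g$ into the curvature formula for $\Omega_{ij}$ writes $s^*\Omega$ linearly in terms of the Schouten tensor $S$, so the integrand becomes a universal contraction of $\nabla h$ with $S$.

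Finally I would integrate by parts. Transferring the derivative from $\nabla h$ onto the $S$--terms turns the contraction into one of $h$ against the covariant derivatives of $S$; the terms in which the derivative falls back on the curvature cancel by the second Bianchi identity, and what remains is exactly the Cotton tensor. Hence
\[
\delta\,CS(P,g,s)=c\int_P\langle\, C,\,h\,\rangle\,\vol_P
\]
for a nonzero universal constant $c$, where $C$ is the symmetric trace--free Cotton--York tensor, the metric realisation of $d^\nabla S$. As $C$ is itself symmetric and trace--free it is an admissible variation direction, so $\delta\,CS$ vanishes for every trace--free $h$ if and only if $C=0$, i.e.\ $d^\nabla S=0$. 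By the characterisation stated above this is precisely conformal flatness, which proves the theorem. The main obstacle I expect is the bookkeeping of the middle and last steps: correctly assembling $\delta\omega_{ij}$ from both the Christoffel and the frame variation, and then arranging the integration by parts so that the non--Cotton contributions cancel through Bianchi while the pure trace part is seen to drop out; getting these contractions and signs right is where the genuine work lies.
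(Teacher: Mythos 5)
The paper itself contains no proof of this theorem: it is quoted directly from Chern--Simons \cite{CS}, so the only available comparison is with the argument in that reference, and your outline is precisely that argument. You take the first variation of the transgression form, use Stokes to reduce it to $-\frac{1}{8\pi^2}\int_P\tr(\delta A\wedge s^*\Omega)$, express $\delta A$ through $\nabla h$, and identify the Euler--Lagrange tensor after integration by parts and the Bianchi identity with the Cotton--York tensor, whose vanishing in dimension three is equivalent to $d^\nabla S=0$, i.e.\ to conformal flatness --- exactly the characterization the paper records immediately before the theorem. Since this is the route of the cited proof, the only caveat is the one you flag yourself: the core computation (assembling $\delta\omega_{ij}$ from the Christoffel and frame variations and verifying the cancellations that leave only the Cotton term) is a plan rather than a completed calculation, but the plan is the correct one.
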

It is possible to deduce from the obstruction
 $\ed^\nabla S=0$ a system of
differential equations in terms of the geometric quantities $H,$ $K,$ and 
$\nabla_TT$ of a Riemannian submersion.  We only state a formula for one part of this equations. For this we need the horizontal Laplacian
$$\Delta^h:=-\dive\circ\grad^h.$$
Then
\begin{Pro*}
A necessary condition for a Riemannian fibered $3-$manifold to be conformally flat is
\begin{equation}\label{laph}
0=\Delta^hH+g(\nabla_TT,\grad H)+2H(H^2-K+\parallel\nabla_TT\parallel^2)
+3H\dive\nabla_TT.
\end{equation}
In case
$\nabla_TT=-\grad^{h}\log H$ this equation turns into
\begin{equation}\label{laph2}
\begin{split}
0=&2\Delta^h H+2g(\grad^h H,\grad^h\log H)+H(H^2-K)\\
=&2H\Delta^h\log H+H(H^2-K).\\
\end{split}
\end{equation}
\end{Pro*}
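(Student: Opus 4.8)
The plan is to read off \eqref{laph} directly from the conformal-flatness equation $\ed^\nabla S=0$, with the Schouten tensor $S$ regarded as a $TP$-valued $1$-form. Since the Levi-Civita connection is torsion free, the absolute exterior derivative simplifies to $(\ed^\nabla S)(X,Y)=(\nabla_X S)Y-(\nabla_Y S)X$, the Cotton tensor, whose vanishing is equivalent to conformal flatness in dimension three. Hence every component of $\ed^\nabla S$ in the adapted frame $T,\hat A,\hat B$ must vanish, and I claim that \eqref{laph} is exactly twice the component $g((\ed^\nabla S)(\hat A,\hat B),T)$.

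To compute this component I would expand $g((\nabla_{\hat A}S)\hat B-(\nabla_{\hat B}S)\hat A,T)$ by the product rule, using for instance $g((\nabla_{\hat A}S)\hat B,T)=\hat A\cdot S(T,\hat B)-g(S\hat B,\nabla_{\hat A}T)-S(T,\nabla_{\hat A}\hat B)$, and then insert the Schouten components of Proposition \ref{s2} together with the connection coefficients of Proposition \ref{lc2}, recalling $\nabla_{\hat A}T=\tfrac12H\hat B$, $\nabla_{\hat B}T=-\tfrac12H\hat A$, and that $\nabla_TT$ is horizontal. Differentiating the mixed component $S(T,X)=g(\mJ(H\nabla_TT-\tfrac12\grad^hH),X)$ along $\hat A,\hat B$ produces the second-order term $\Delta^hH$, the term $g(\nabla_TT,\grad H)$, and $H(\dive\nabla_TT+\parallel\nabla_TT\parallel^2)$. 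The terms $-g(S\hat B,\nabla_{\hat A}T)$ and $+g(S\hat A,\nabla_{\hat B}T)$ together give $-\tfrac12H(S(\hat A,\hat A)+S(\hat B,\hat B))$, while the terms $S(T,\nabla_{\hat A}\hat B)$ and $S(T,\nabla_{\hat B}\hat A)$ feed back $H\,S(T,T)$, which carries the remaining divergence through the $\tfrac12\dive\nabla_TT$ sitting inside $S(T,T)$.

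Two simplifications make the answer intrinsic. First, a direct check shows that the horizontal trace is purely algebraic: the first-order terms $\tfrac12 g(\nabla_X\mJ\nabla_TT,\mJ X)$ and the quadratic terms cancel pairwise under $\mJ\hat A=\hat B$, $\mJ\hat B=-\hat A$, leaving $S(\hat A,\hat A)+S(\hat B,\hat B)=-\tfrac34H^2+K$. Second, and this is the main technical obstacle, all terms involving $\grad\lambda$, i.e. the Christoffel data of the holomorphic base frame, must drop out so that the final equation is frame independent. This cancellation is precisely what converts the rough frame Laplacian $\hat A\cdot(\hat A\cdot H)+\hat B\cdot(\hat B\cdot H)$ into $-\Delta^hH$ plus a correction $g(\grad H,\nabla_TT)$ coming from $g(\nabla_T\grad^hH,T)$, and the rough divergence of $\nabla_TT$ into $\dive\nabla_TT+\parallel\nabla_TT\parallel^2$; the residual $\grad\lambda$-terms of these conversions then cancel against the explicit $\grad\lambda$-terms generated by $S(T,\nabla_{\hat A}\hat B)$ and $S(T,\nabla_{\hat B}\hat A)$. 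Collecting what survives gives $g((\ed^\nabla S)(\hat A,\hat B),T)=\tfrac12\Delta^hH+\tfrac12 g(\nabla_TT,\grad H)+\tfrac32H\dive\nabla_TT+H\parallel\nabla_TT\parallel^2+H^3-HK$, so multiplying by two yields \eqref{laph}.

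For the special case I would substitute $\nabla_TT=-\grad^h\log H=-\tfrac1H\grad^hH$, which gives $g(\nabla_TT,\grad H)=-\tfrac1H\parallel\grad^hH\parallel^2$, $\parallel\nabla_TT\parallel^2=\tfrac1{H^2}\parallel\grad^hH\parallel^2$, and, from $\dive(\tfrac1H\grad^hH)=-\tfrac1H\Delta^hH-\tfrac1{H^2}\parallel\grad^hH\parallel^2$, also $\dive\nabla_TT=\tfrac1H\Delta^hH+\tfrac1{H^2}\parallel\grad^hH\parallel^2$. Inserting these into \eqref{laph} and dividing by two collapses the $\parallel\grad^hH\parallel^2$ terms with coefficient $-1+2+3=4$ into $\tfrac2H\parallel\grad^hH\parallel^2$, leaving $2\Delta^hH+\tfrac2H\parallel\grad^hH\parallel^2+H(H^2-K)=0$. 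This is the first line of \eqref{laph2} once one recognizes $\tfrac2H\parallel\grad^hH\parallel^2=2g(\grad^hH,\grad^h\log H)$; the second line then follows from $\Delta^h\log H=\tfrac1H\Delta^hH+\tfrac1{H^2}\parallel\grad^hH\parallel^2$, which shows $2H\Delta^h\log H=2\Delta^hH+\tfrac2H\parallel\grad^hH\parallel^2$.
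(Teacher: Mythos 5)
Your proposal is correct and takes essentially the same route as the paper: the paper's proof consists precisely of the claim that the right-hand side of (\ref{laph}) equals $2g(\ed^\nabla S(\hat A\wedge \hat B),T)$, which vanishes by conformal flatness, followed by the substitution $\nabla_TT=-\grad^h\log H$. The only difference is that the paper dismisses the identity with ``one easily computes,'' whereas you actually carry it out, and your intermediate steps check out --- in particular the horizontal trace $S(\hat A,\hat A)+S(\hat B,\hat B)=-\tfrac{3}{4}H^2+K$, the cancellation of the $\grad\lambda$-terms against $S(T,\nabla_{\hat A}\hat B)-S(T,\nabla_{\hat B}\hat A)$, and the final expression $g((\ed^\nabla S)(\hat A,\hat B),T)=\tfrac12\Delta^hH+\tfrac12 g(\nabla_TT,\grad H)+\tfrac32H\dive\nabla_TT+H\parallel\nabla_TT\parallel^2+H^3-HK$.
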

\begin{proof}
One easily computes that the right hand side of \ref{laph} equals $2g(d^\nabla S(A\wedge B),T),$ which is a
component of the absolute exterior derivative of the Schouten tensor. It vanishes on conformally flat $3-$spaces.

The second equation follows easily by putting $\nabla_TT=-\grad^{h}\log H$ into \ref{laph}.
\end{proof}

\section{Harmonic Morphisms on Conformally Flat $3-$Spheres}\label{ls}
We will now study harmonic morphisms on a
 conformally flat $3-$sphere under two assumptions: We only consider submersive harmonic morphisms, and we restrict ourselves to the case where the curvature of
the horizontal distribution is nowhere vanishing. The latter is exactly the case where the induced CR structure on $S^3$ is strictly pseudo-convex.
Note that this is equivalent to the fact that the equation \ref{laph} is hypo-elliptic, see \cite{Hoe}.

By changing one of the orientations either on the $3-$space or on the surface ,the curvature function $H$ (\ref{defH}) of the horizontal distribution changes its sign.
So we will assume in the following that $H>0,$ and we say that the horizontal distribution is of positive curvature. Note that this is exactly the case where the submersion is homotopic to the Hopf fibration.

For the rest of the paper we are going to use the metrics given by 
\ref{spme}.
\begin{Pro}\label{intvol}
Let $\pi\colon (S^3,\tilde g)\to (S^2,[h])$ be a submersive harmonic
morphism such that its horizontal distribution has positive curvature.
Let $g$ on $S^3$ and $h$ on $S^2$ be given as in \ref{spme}
with corresponding volume forms $\vol_{S^3}$ and $\vol_{S^2}.$ Then we have
$$\int_{S^2}\vol_{S^2}=\pi,$$ and consequently
$$\int_{S^3}\frac{1}{H}\vol_{S^3}=\pi^2.$$
\end{Pro}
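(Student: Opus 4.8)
The second identity is a formal consequence of the first together with Lemma~\ref{spme}, so the plan is to establish $\int_{S^2}\vol_{S^2}=\pi$ first and then to integrate over the fibers. Throughout I write $\eta:=g(\,\cdot\,,T)$ for the $1$-form dual to the unit vertical field, so that for the Riemannian submersion $\vol_{S^3}=\eta\wedge\pi^*\vol_{S^2}$. From Lemma~\ref{spme} we have $\nabla_TT=-\grad^h\log H$; since $\nabla_TT$ is horizontal, its vertical component vanishes, which forces $T\cdot\log H=0$. Hence $H$ is constant along the fibers and descends to a function on $S^2$, and Lemma~\ref{spme} says precisely that $\int_{\pi^{-1}(p)}\tfrac1H\eta=\pi$ for every $p$.

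The key step is to compute $d\bigl(\tfrac1H\eta\bigr)$. Using Proposition~\ref{lc2} in the form $\nabla_XT=\tfrac12 H\mJ X+g(X,T)\nabla_TT$ and the metric compatibility of $\nabla$, I would obtain
$$d\eta(X,Y)=g(\nabla_XT,Y)-g(\nabla_YT,X)=H\,g(\mJ X,Y)+g(X,T)g(\nabla_TT,Y)-g(Y,T)g(\nabla_TT,X).$$
Evaluating on the frame $T,\hat A,\hat B$ and inserting $\nabla_TT=-\grad^h\log H$ then gives
$$d\eta=H\,\pi^*\vol_{S^2}-\eta\wedge\pi^*d\log H.$$
Because $H$ is fiberwise constant we have $dH=\pi^*d_{S^2}H$, and a short computation shows that the two terms involving $dH$ cancel, yielding the clean identity
$$d\Bigl(\tfrac1H\eta\Bigr)=\pi^*\vol_{S^2}.$$

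Now I would read off the area topologically. As a proper submersion between closed manifolds, $\pi$ is an oriented $S^1$-bundle (Ehresmann), classified by an Euler number $e\in H^2(S^2;\Z)\cong\Z$. The identity above together with $\int_{\text{fiber}}\tfrac1H\eta=\pi$ exhibits $\tfrac1{\pi H}\eta$ as a global angular form whose ``curvature'' is $\tfrac1\pi\vol_{S^2}$, so $\tfrac1\pi[\vol_{S^2}]$ represents the Euler class. Concretely, covering $S^2$ by two disks $D_\pm$ with common boundary $C$, choosing local sections $s_\pm$ and applying Stokes to $d(s_\pm^*\tfrac1H\eta)=s_\pm^*\pi^*\vol_{S^2}=\vol_{S^2}$ on each disk, the boundary contribution $\int_C\bigl(s_+^*-s_-^*\bigr)\tfrac1H\eta$ equals $e$ times the fiber period $\pi$; hence $\int_{S^2}\vol_{S^2}=e\,\pi$. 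Finally, circle bundles over $S^2$ with total space $S^3$ force $e=\pm1$ (larger $|e|$ give lens spaces, $e=0$ gives $S^2\times S^1$), and with our orientation conventions ($H>0$, so $\pi$ is homotopic to the Hopf fibration) $e=+1$; equivalently, positivity of the area already pins $e=1$. This gives $\int_{S^2}\vol_{S^2}=\pi$.

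For the consequence, since $H$ is constant on fibers and $\vol_{S^3}=\eta\wedge\pi^*\vol_{S^2}$, Fubini yields
$$\int_{S^3}\tfrac1H\vol_{S^3}=\int_{S^2}\Bigl(\int_{\pi^{-1}(p)}\tfrac1H\eta\Bigr)\vol_{S^2}=\int_{S^2}\pi\,\vol_{S^2}=\pi^2.$$
I expect the main obstacle to be the curvature identity $d\bigl(\tfrac1H\eta\bigr)=\pi^*\vol_{S^2}$, specifically verifying the cancellation of the $dH$-terms, and matching the nonstandard fiber period $\pi$ of the angular form to the integer Euler number so that the sign works out; everything else is Fubini and the classification of circle bundles over $S^2$.
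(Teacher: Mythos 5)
Your proposal is correct and takes essentially the same route as the paper: the paper's proof likewise hinges on the identity $\ed\bigl(\tfrac{1}{H}\theta_3\bigr)=\pi^*\vol_{S^2}$ (with $\theta_3=g(\cdot,T)$, your $\eta$), interprets the rescaled form as a connection on the circle bundle $S^3\to S^2$, reads off $\int_{S^2}\vol_{S^2}=\pi$ from the bundle having degree $\pm1$, and finishes with Fubini. The only cosmetic differences are the normalization (the paper scales to fiber period $2\pi$ and quotes the degree $-1$ of the Hopf fibration together with the Chern--Weil formula, whereas you keep period $\pi$ and pin the sign by positivity of the area, which neatly avoids any sign-convention issues) and your two-disk clutching computation in place of the paper's appeal to the principal-bundle curvature formula.
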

\begin{proof} Recall that $H>0.$ Set $\theta_3=g(.,T).$
Then the equation $\nabla_TT=-\grad^h\log H$ together with a short computation imply
$$\ed\frac{1}{H}\theta_3=\pi^*\vol_{S^2}.$$ Because of
$\int_{\pi^{-1}(p)}\frac{2}{H}\theta_3=2\pi,$ the flow of $\frac{H}{2}T$
gives rise to a principal $S^1-$bundle $\pi\colon S^3\to S^2,$
and one easily sees 
that $$\omega:=\frac{2}{H}\theta_3$$ is a principal connection form.
Then the curvature form of this connection is given by 
$$\Omega=2\vol_{S^2}.$$
But the degree of this bundle is $-1,$ and can be determined as
$$-2\pi\deg(S^3\to S^2)=\int_{S^2}\ed(\frac{2}{H}\theta_3)=\int_{S^2}2\vol_{S^2}.$$
Therefore $\int_{S^2}\vol_{S^2}=\pi,$ and 
$\int_{S^3}\frac{1}{H}\vol_{S^3}=\int_{S^2}\pi\vol_{S^2}=\pi^2$ by Fubini.
\end{proof}
We are going to compute the Chern-Simons functional in terms of the geometric quantities $H,\ K$ and $\nabla_TT=-\grad^h\log H.$ 
For this we need the following observation:
\begin{Lem}\label{updown}
Let $\pi\colon S^3\to S^2$ be a Riemannian submersion and $X\in\Gamma(S^3,\mH)$ be a horizontal vector field of length $1.$
Then the mapping degree of
$$p\in\pi^{-1}(q)\mapsto \ed_p\pi(X_p)\in S^1\subset T_qS^2$$
is $\pm 2$ for each fiber, where the sign is given by the sign of the degree of the bundle $\pi.$

Conversely, let $U\subset S^2$ be a nonempty open set such that $S^2\setminus U$ is simply connected 
with nonempty interior.
Let $A$ be a non-vanishing vector field of length $1$ defined on $U\subset S^2,$
and $e^{i\varphi}\colon\pi^{-1}(U)\to S^1$ be a map. Then
$$\cos\varphi\hat A+\sin\varphi\mJ\hat A$$ can be extended to a globally defined, non-vanishing, horizontal vector field of $S^3$ if and only if the mapping degree of  $e^{i\varphi}$ is $\pm 2$ 
for each fiber, with the same sign as above.
\end{Lem}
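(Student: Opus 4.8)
The plan is to read both statements through the complex line bundle $\mathcal{H}\cong\pi^{*}TS^{2}$, the isomorphism being $d\pi$, which identifies the unit horizontal vectors over a fibre with the unit circle of the (fixed) tangent plane of the base point; write $\phi\colon\pi^{-1}(q)\to S^{1}\subset T_{q}S^{2}$, $p\mapsto d_{p}\pi(X_{p})$. \textbf{First part.} Since $X$ is a global unit section of $\mathcal{H}$, the composite $\Psi:=d\pi\circ X\colon S^{3}\to S(TS^{2})$ is a bundle morphism over $S^{2}$ covering the identity, going from the circle bundle $\pi$ to the unit tangent bundle $S(TS^{2})$, and on each fibre it is exactly $\phi$, of some degree $k$. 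A fibrewise map of degree $k$ between the unit circle bundles of two complex line bundles $L,L'$ over $S^{2}$ is the unit version of a bundle morphism $L^{\otimes k}\to L'$; being a fibrewise isomorphism it forces $L'\cong L^{\otimes k}$, hence $c_{1}(L')=k\,c_{1}(L)$. Here $L$ is the line bundle of $\pi$ with $c_{1}(L)=\deg\pi$ and $L'=TS^{2}$ with $c_{1}=\chi(S^{2})=2$, so $k\deg\pi=2$. By \ref{intvol} the bundle $\pi$ has degree $-1$, whence $k=\pm2$ with $\operatorname{sign}(k)=\operatorname{sign}(\deg\pi)$, and the value is the same on every fibre because $\Psi$ is a bundle map. (The same number can be recovered analytically: writing $X=\cos\alpha\,\hat A+\sin\alpha\,\hat B$ one gets $T(\alpha)=g(\nabla_{T}X,\mathcal{J}X)-\tfrac12 H$ from Proposition \ref{lc2}, so that $2\pi k=\oint_{\mathrm{fibre}}T(\alpha)\,\theta_{3}$, and evaluating the two terms with \ref{intvol} yields $\pm2$.)

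\textbf{Converse, easy direction.} If $\cos\varphi\,\hat A+\sin\varphi\,\mathcal{J}\hat A$ is the restriction of a global unit horizontal field $X$, then over a fibre $\pi^{-1}(q)$ with $q\in U$ we have $d\pi(X)=\cos\varphi\,A+\sin\varphi\,JA$, so the angle of $\phi$ relative to the fixed frame $(A,JA)_{q}$ equals $\varphi$. Hence $\deg(e^{i\varphi}|_{\mathrm{fibre}})=\deg\phi=\pm2$ with the stated sign, by the first part.

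\textbf{Converse, extension direction.} Fix once and for all a global unit horizontal field $X_{0}$ (one exists since $\mathcal{H}$ is trivial over $S^{3}$) and write $X_{0}=\cos\psi\,\hat A+\sin\psi\,\mathcal{J}\hat A$ on $\pi^{-1}(U)$. Then $\cos\varphi\,\hat A+\sin\varphi\,\mathcal{J}\hat A$ extends over $S^{3}$ if and only if the $S^{1}$-valued function $g:=e^{i(\varphi-\psi)}$ on $\pi^{-1}(U)$ extends to $S^{3}$, the extension being $X:=g\cdot X_{0}$. By the first part applied to $X_{0}$ and the easy direction applied to the hypothesis, both $e^{i\psi}$ and $e^{i\varphi}$ have fibre degree $\pm2$ with the same sign, so $g$ has fibre degree $0$. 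It remains to extend $g$ over $C:=\pi^{-1}(S^{2}\setminus U)$. Because $S^{2}\setminus U$ is simply connected the circle bundle $\pi|_{C}$ is trivial, so the primary and only obstruction to extending the section over $C$ lies in $H^{2}(C,\partial C;\mathbb{Z})\cong H_{1}(C;\mathbb{Z})\cong\mathbb{Z}$, generated by the fibre class; this obstruction is measured exactly by the fibre degree of $g$ along $\partial C$, which is $0$. Hence $g$, and with it the given field, extends.

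The step I expect to be the main obstacle is this last extension: one must verify that the single integral obstruction to extending a circle-bundle section over $C$ is precisely the fibre degree and nothing more. This is where the hypothesis that $S^{2}\setminus U$ be simply connected with nonempty interior is essential, since it makes $C$ a genuine circle bundle over a disc-like region, kills $H_{1}$ of the base, and thereby reduces the obstruction group to the single copy of $\mathbb{Z}$ generated by the fibre — against which the fibre degree $0$ of $g$ is exactly the condition that guarantees the extension.
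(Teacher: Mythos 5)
Your first part and the easy half of the converse are correct, and the first part takes a genuinely different route from the paper: the paper homotopes $\pi$ to the (conjugate) Hopf fibration and compares an arbitrary unit horizontal field with the left-invariant field $J$, using that $S^3$ is simply connected, whereas you read the fiber degree off from $c_1(TS^2)=2$ and the Chern class of the bundle $\pi$ via the identification $\mH\cong\pi^*TS^2$. That argument is sound. The problem is in the extension step of the converse, precisely the step you yourself single out as the main obstacle.

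For the pair $(C,\del C)$ with $C$ a solid torus, the obstruction to extending a map $g\colon\del C\to S^1$ over $C$ is \emph{not} its fiber degree: under your identification $H^2(C,\del C;\Z)\cong H_1(C;\Z)\cong\Z$, the obstruction class is $\deg_{m}(g)$ times the generator, where $m\subset\del C$ is the \emph{meridian} of $C$, i.e.\ the curve bounding a disk in $C$ (a section of $\pi$ over $\del(S^2\setminus U)$ in a trivialization of $\pi|_C$), and not the fiber. Concretely, the projection of $\del C\cong m\times(\mathrm{fiber})$ onto $m\cong S^1$ has fiber degree $0$ but does not extend over $C$, since its restriction to $m$ has degree $1$ while $m$ bounds a disk in $C$. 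So ``fiber degree $0$ along $\del C$'' alone does not give the extension, and simple connectivity of $S^2\setminus U$ only buys triviality of $\pi|_C$, not your identification of the obstruction. What actually saves the conclusion is global information your argument never invokes: $g$ is defined on the whole complementary solid torus $\pi^{-1}(U)$, so its degree along the meridian $m'$ of that side vanishes; and because the total space is $S^3$, so that $H_1(S^3)=0$ forces $\{m,m'\}$ to be a basis of $H_1(\del C;\Z)$, one gets $[\mathrm{fiber}]=\pm m\pm m'$ in homology and hence $\deg_m(g)=\pm\deg_{\mathrm{fiber}}(g)=0$. This computation — equivalently, the Euler number $\pm1$ of the Hopf bundle — is the real content of the converse; for the trivial bundle $S^2\times S^1\to S^2$, for instance, the obstruction is never the fiber degree. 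A cleaner repair, which is essentially the paper's own argument, avoids the relative obstruction group altogether: since $S^1$ is a $K(\Z,1)$ and $H^1(S^3;\Z)=0$, the map $g$ extends from $\pi^{-1}(U)$ to $S^3$ if and only if $[g]=0$ in $H^1(\pi^{-1}(U);\Z)\cong\Z$, and that class is exactly the fiber degree; in the paper's language, every $S^1$-valued map on the simply connected $S^3$ admits a global logarithm, hence has the same fiber degree as the reference field.
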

\begin{proof}
Every submersion of $S^3$ is homotopic to the Hopf fibration, which has
degree $-1,$ or to the conjugate Hopf fibration with degree $1.$
Both of them differ only by orientation, thus the proof of the lemma reduces to one of these cases.

The fibers of the conjugate Hopf fibration are given by the oriented integral curves of the left invariant vector field $I$, and for the left invariant horizontal field 
$J,$ the mapping degree of the projection is $2$
for each fiber.
Every other horizontal vector field is given
by $X=e^{i\varphi}J$ for some well-defined $e^{i\varphi}\colon S^3\to S^1.$ Further we have that $S^3$ is simply connected, hence the logarithm $\varphi\colon S^3\to\R$ is
well-defined, too, and consequently $X$ also has degree $2.$ 

The proof of the inverse direction follows easily by reversing the arguments.
\end{proof}
\begin{Pro}\label{chsi}
Let $g$ be the conformally flat metric on $S^3$ as in \ref{spme}. Let $T$ be the unique vertical vector field of length $1$ in positive fiber direction 
and let $X,Y$ be horizontal. Then the Chern-Simons functional with respect to the positive oriented orthonormal frame $s=(X,Y,T),$ is
$$8\pi^2CS(S^3,g,s)=\int_{S^3}4\frac{K}{H}-\frac{1}{2}H^3\vol_{S^3},$$ where $K$ is the Gaussian curvature of the surface and $H$ is the curvature function of the 
horizontal distribution. 
\end{Pro}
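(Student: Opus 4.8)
The plan is to compute the Chern--Simons integrand directly in the orthonormal coframe $\theta_1,\theta_2,\theta_3$ dual to $(X,Y,T)$. I would start from the second, curvature-form expression of the functional, so that the integrand is $-\tfrac12\tr(s^*\omega\wedge s^*\Omega)+\tfrac16\tr(s^*\omega\wedge s^*\omega\wedge s^*\omega)$. Since the structure group is $SO(3)$, only the three forms $\omega_{12},\omega_{13},\omega_{23}$ enter, and skew-symmetry collapses the two traces to
\[
\sum_{i<j}\omega_{ij}\wedge\Omega_{ij}+\omega_{12}\wedge\omega_{23}\wedge\omega_{31}.
\]
The connection forms are read off from Proposition \ref{lc2} with $\nabla_TT=-\grad^h\log H$: writing $\sigma=g(\mJ\grad\lambda,\,\cdot\,)$ for the pullback of the surface Levi--Civita form, one gets $\omega_{12}=-\tfrac12 H\theta_3-\sigma$, $\omega_{13}=-\tfrac12 H\theta_2-(\hat A\log H)\theta_3$ and $\omega_{23}=\tfrac12 H\theta_1-(\hat B\log H)\theta_3$, while the $\Omega_{ij}$ are obtained from the Schouten tensor of Proposition \ref{s2} via $R=-S\cdot g$. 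Inserting all of this gives a local $3$-form whose $\vol_{S^3}$-density I must identify.

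The genuine difficulty is global: $S^2$ admits no nowhere-vanishing vector field, so $(\hat A,\hat B)$ exists only over a proper open subset, whereas the Chern--Simons integral requires a global frame. By Lemma \ref{updown} the global frame is $X=\cos\varphi\,\hat A+\sin\varphi\,\hat B$ with a phase $e^{i\varphi}$ of fibre-degree $2$. Under this $SO(2)$-rotation $\omega_{12}$ changes by $-\ed\varphi$, whereas $\Omega_{12}$, the pairing $\omega_{13}\wedge\Omega_{13}+\omega_{23}\wedge\Omega_{23}$ and the product $\omega_{23}\wedge\omega_{31}$ are all invariant. A short computation using $\Omega_{12}=\ed\omega_{12}+\omega_{13}\wedge\omega_{32}$ then shows that the integrand for the global frame equals the one for the local frame minus $\ed\varphi\wedge\ed\omega_{12}$, the two curvature contributions cancelling exactly. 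This clean reduction is what makes the global term computable.

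Next I would exploit that, although $\omega_{12}$ is only local, its differential is global: $\ed\omega_{12}=-\tfrac12\,\ed(H\theta_3)-\ed\sigma$, where $\ed\sigma=\pm K\,\pi^*\vol_{S^2}$ is a globally defined basic form coming from $K=\Delta\lambda$ (the Gauss--Bonnet data of the base). For the winding factor, since the flow of $\tfrac H2 T$ has period $2\pi$ (Proposition \ref{intvol}) and $\varphi$ turns by $4\pi$ along each fibre, the total fibre-winding is $\oint\ed\varphi=4\pi$. Integrating $\ed\varphi\wedge\ed\omega_{12}$ by Fubini, only the basic part of $\ed\omega_{12}$ survives against the vertical component of $\ed\varphi$, and one obtains the density $\tfrac{4K}{H}\vol_{S^3}$; equivalently this term integrates to $4\pi\int_{S^2}K\,\vol_{S^2}=16\pi^2$, the topological part of the answer.

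Finally I would assemble everything. Using that $H$ is constant along the fibres (a consequence of Lemma \ref{spme}), that $\theta_1\wedge\theta_2=\pi^*\vol_{S^2}$ and that each fibre has length $\pi H$ (Proposition \ref{intvol}), all the horizontal-derivative terms and the pieces involving $\sigma$ and $\grad\log H$ organize into total divergences, which drop out on the closed $S^3$; the surviving cubic-plus-curvature density is $-\tfrac12 H^3$. Together with the winding contribution this gives $8\pi^2 CS(S^3,g,s)=\int_{S^3}4\frac{K}{H}-\frac{1}{2}H^3\,\vol_{S^3}$. The main obstacle is exactly this bookkeeping across steps two--four: isolating the winding contribution of the non-globalizable frame, and carrying the $\mathfrak{so}(3)$-trace together with the Schouten-tensor substitution (in the paper's curvature sign convention) so that precisely the two stated densities remain and every cross term is a divergence.
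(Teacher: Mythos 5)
Your algebra in steps one and two is sound: the collapse of the traces, and the identity that under the rotation $X+iY=e^{i\varphi}(\hat A+i\hat B)$ the Chern--Simons integrand changes by exactly $-\ed\varphi\wedge\ed\omega_{12}$ (since $\Omega_{12}+\omega_{23}\wedge\omega_{31}=\ed\omega_{12}$), is a clean reformulation of what the paper does by brute force with the frame $s=(X,Y,T)$. The gap is in steps three and four, where you declare that all remaining terms ``organize into total divergences, which drop out on the closed $S^3$.'' None of the objects in those terms is globally defined on $S^3$: the frame $(\hat A,\hat B)$ exists only over $V=S^2\setminus\{p\}$, the vector field $A$ has index $2$ at $p$, so $\lambda$ (with $e^\lambda(A-iB)$ holomorphic) behaves like $2\log r$ and $\ed\lambda$, $\mJ^*\ed\lambda$ blow up like $1/r$ along the fiber $\pi^{-1}(p)$; likewise $\eta$ (any primitive of $4\vol_{S^2}$) and the phase $\varphi$ are singular there. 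For forms that are exact only on $S^3\setminus\pi^{-1}(p)$, exactness does \emph{not} imply vanishing integral: one must exhaust by sets $U_n=\pi^{-1}(B_n)$, apply Stokes, and prove that the boundary contributions $\int_{\del U_n}H*(-\mJ^*\eta-\ed\lambda)$ tend to zero. This is the actual core of the paper's proof: a Fubini reduction to $\int_{\del B_n}h(\eta+*\ed\lambda)$ with $h(q)=\int_{\pi^{-1}(q)}H\theta_3$, a Hodge-theoretic splitting $\eta+*\ed\lambda=\alpha+\ed f$ (which needs $\int_{S^2}(4-K)\vol_{S^2}=0$, i.e.\ Gauss--Bonnet combined with $\int_{S^2}\vol_{S^2}=\pi$ from \ref{intvol}), and then a two-case analysis on $h$, choosing the exhaustion adapted to level sets of $h$ precisely because $f$ may be unbounded near the puncture. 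Your proposal leaves this entire issue untouched, and it cannot be waved away: the same singular fiber also contaminates your Fubini step, where the cross term between the horizontal part of $\ed\varphi$ and the non-basic part of $\ed\omega_{12}$ is exactly one of the terms needing this boundary analysis.

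Two subsidiary claims are also unjustified. Lemma \ref{spme} does not give that $H$ is constant along fibers, nor that fibers have length $\pi H$; it only gives the integral normalization $\int_{\pi^{-1}(q)}\frac1H\theta_3=\pi$, which is weaker (for the metric of \ref{spme} the fibers are not geodesics, $\nabla_TT=-\grad^h\log H$, and $H$ need not be fiberwise constant). The paper's proof never uses such constancy: it instead fixes a specific phase with $\ed\varphi=\eta-\frac4H\theta_3$, so that $T\cdot\varphi=-\frac4H$ produces the density $\frac{4K}{H}$ pointwise, with no appeal to fiber-constancy. If you drop those two claims and supply the boundary-term analysis at the singular fiber, your gauge-variation framing would give a somewhat tidier organization of the same proof; without it, the argument is incomplete at its hardest point.
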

\begin{proof}
Let $A,B$ be a positive oriented orthonormal frame
on $V:=S^2\setminus\{p\}$ for some $p\in S^2,$ and let $\lambda\colon V\to\R$ be a function such that 
$e^{\lambda}(A-iB)$ is holomorphic. We denote their horizontal lifts by 
$A$ and $B,$ too. For any frame $s=(X,Y,T),$ there is a function $e^{i\varphi}\colon\pi^{-1}(V)\to\R$ with $X+iY=e^{i\varphi}(A+iB).$ 
By using \ref{lc2}, we compute the following connection forms for the frame $s=(X,Y,T):$
\begin{equation}
\begin{split}
\omega^1=&\mJ^*\ed\lambda-\frac{1}{2}H\theta_3-\ed\varphi,\\
\omega^2=&-\frac{1}{2}H\theta_2+g(\nabla_TT,X)\theta_3,\\
\omega^3=&\frac{1}{2}H\theta_1+g(\nabla_TT,Y)\theta_3,
\end{split}
\end{equation}
where 
$$\theta_1=\cos\varphi\ g(.,A)+\sin\varphi\ g(.,B),\ 
\theta_2=-\sin\varphi\ g(.,A)+\cos\varphi\ g(.,B),\ \theta_3=g(.,T)$$ is the dual frame of 
$X,Y,T.$ With the arguments used in \ref{csf} and $\nabla_TT=-\grad^h\log H,$ we compute the integrand $\mu$ of the Chern-Simons functional on $\pi^{-1}(V):$
\begin{equation*}
\begin{split}
\mu=&(\frac{1}{2}H(H^2-K)+T\cdot\varphi(\frac{1}{2}H^2-K)+\frac{1}{2}\Delta^hH\\
&+g(\grad H,\mJ\grad\varphi-\grad\lambda+2\grad^h\log H)\vol_{S^3}.\\
\end{split}
\end{equation*}
A necessary condition (\ref{laph2}) for $g$ to be conformally flat is
$0=2H\Delta^h\log H+H(H^2-K).$
Thus we obtain
\begin{equation*}
\begin{split}
\mu=&(-\frac{1}{2}H(H^2-K)+T\cdot\varphi(\frac{1}{2}H^2-K)\\
&-\frac{3}{2}\Delta^hH+g(\grad H,\mJ\grad\varphi-\grad\lambda)\vol_{S^3}.\\
\end{split}
\end{equation*}
For $n\in\N$ we set $U_n:=\pi^{-1}(B_n),$ where the $B_n\subset B_{n+1}\subset V=S^2\setminus\{p\}$ are open, connected, and simply connected subsets with
piecewise smooth boundary, such that $\lim_{n\to\infty}\int_{S^2\setminus B_n}\vol_{S^2}=0$ and $\bar B_n\neq S^2.$ 
These sets will be specified later on.

Let $\eta\in\Omega^1(V)$ be a $1-$form on the surface such that $\ed\eta=4\vol_{S^2}.$ We also denote the
pull-back $\pi^*\eta$ by $\eta.$
We claim that there exists a map $$e^{i\varphi}\colon\pi^{-1}(V)\to S^1$$ with
$\ed\varphi=\eta-\frac{4}{H}\theta_3.$ To prove the existence of $e^{i\varphi},$ note that
$\ed(\eta-\frac{4}{H}\theta_3)=0$ by construction of $\eta,$ and that 
$$\int_{\pi^{-1}(q)}\eta-\frac{4}{H}\theta_3=-4\pi\in\ker(t\mapsto e^{it})$$ for 
any generator 
$\pi^{-1}(q),$ $q\neq p,$ of the first fundamental group of $\pi^{-1}(V).$ 
We have shown in \ref{updown} that there exists 
for each $U_n$ a globally defined orthonormal frame $s_n=s=(X,Y,T)$
with $X=\cos\varphi A+\sin\varphi B$ on $U_n.$
Thus
\begin{equation}\label{cs3}
\begin{split}
8\pi^2CS(S^3,g,s)=&\int_{S^3}\mu=lim_{n\to\infty}\int_{U_n}\mu\\
=\lim_{n\to\infty}&\int_{U_n}-\frac{1}{2}H(H^2-K)-\frac{4}{H}(\frac{1}{2}H^2-K)+\frac{1}{2}H(4-K)\vol_{S^3}\\
&+\lim_{n\to\infty}\frac{1}{2}\int_{\del U_n}H*(-\mJ^*\eta-\ed\lambda)\\
=\lim_{n\to\infty}&\int_{U_n}(\frac{4K}{H}-\frac{1}{2}H^3)\vol_{S^3}
+\lim_{n\to\infty}\frac{1}{2}\int_{\del U_n}H*(-\mJ^*\eta-\ed\lambda)\\
=&\int_{S^3}(\frac{4K}{H}-\frac{1}{2}H^3)\vol_{S^3}
+\lim_{n\to\infty}\frac{1}{2}\int_{\del U_n}H*(-\mJ^*\eta-\ed\lambda),\\
\end{split}
\end{equation}
where the third equality follows from
$$\delta(\mJ^*\eta+\ed\lambda)=K-4-g(\ed\log H,\mJ^*\eta+\ed\lambda)$$
and the following application of Stokes for each $n\in\N:$
\begin{equation*}\label{cs3}
\begin{split}
&\int_{U_n}g(\grad H,\mJ\grad\varphi-\grad\lambda)\vol\\
&=\int_{U_n}H\delta(-\mJ^*\eta-\ed\lambda)\vol\\
&+ \int_{\del U_n}H*(-\mJ^*\eta-\ed\lambda)\\
&=\int_{U_n}H(4-K)+g(\grad H,\grad\lambda-\mJ\grad\varphi)\vol.\\
&+ \int_{\del U_n}H*(-\mJ^*\eta-\ed\lambda).\\
\end{split}
\end{equation*}
It remains to show that 
$\lim_{n\to 0}\int_{\del U_{n}}H*(-\mJ^*\eta-\ed\lambda)=0.$
Let the function $h\colon S^2\to\R$ be defined by
$$q\in S^2\mapsto\int_{\pi^{-1}(q)}H\theta_3.$$
Note that for any form $\alpha\in\Omega^1(S^2)$ we have
$*\pi^*\alpha=\pi^**\alpha\wedge\theta_3.$
We apply Fubini to obtain
\begin{equation}\label{boundint}
\begin{split}
\int_{\del U_n}H*(-\mJ^*\eta-\ed\lambda)=&
\int_{\pi^{-1}(\del B_n)}H*(-\mJ^*\eta-\ed\lambda)\\
&=-\int_{\del B_n}h(\eta+*\ed\lambda),\\
\end{split}
\end{equation}
where the Hodge star $*$ in the last line is the one on the surface. On $V\subset S^2$  it is $$\ed(\eta+*\ed\lambda)=(4-K)\vol_{S^2}.$$
By using \ref{intvol} and Gauss-Bonnet we have  $$\int_{S^2}(4-K)\vol_{S^2}=0,$$ 
thus  $(4-K)\vol_{S^2}$ is an exact differential form on $S^2$ by Hodge theory, i.e. there exists $$\alpha\in\Omega^1(S^2)$$ with $\ed\alpha=(4-K)\vol_{S^2}.$ Therefore, on $V$ we have
$$\eta+*\ed\lambda=\alpha+\beta,$$ where $\beta$ is an appropriate closed $1-$form on $V.$ Since $V$ is simply connected we have 
$\beta=\ed f$ for some function $f\colon V\to\R.$ Because of Stokes
theorem, and because $\alpha$ and $h$ are defined on the whole of $S^2,$ we obtain
\begin{equation}\label{intalpha}
\begin{split}
\lim_{n\to\infty}\int_{\del B_n}h\alpha=
-&\lim_{n\to\infty}\int_{S^2\setminus B_n}\ed(h\alpha)=0.\\
\end{split}
\end{equation}
The only term left in (\ref{boundint}) to investigate is $\lim_{n\to\infty}\int_{\del B_n}h\ed f.$
We consider two cases:
In the first the function $h$ has an isolated critical point or is constant on an open set. Then, there are closed embedded curves $\gamma_n\colon S^1\to S^2$ 
and a point $p\in S^2,$ such that $h$ is constant along each 
$\gamma_n,$ and such that $\lim_{n\to\infty}\vol(B_n)=\vol_{S^2},$ where $B_n$ is the
component of $S^2\setminus\image\gamma_n$ with $p\notin B_n.$ It follows immediately that
$$\lim_{n\to\infty}\int_{\del B_n}h\ed f=\pm\lim_{n\to\infty}
\int_{\gamma_n}h\ed f=0.$$\\

In the other case, there is a point $p\in S^2$ with $\grad_p h\neq0.$ Furthermore, there exists a small neighborhood $\tilde V$ around $p$ such that $f$ is bounded on $S^2\setminus\tilde V$ and for each $c\in\R$ the set
$h^{-1}(c)\cap\tilde V$ is empty or consist of the image of a smooth, connected and open curve. In fact, one can choose $\tilde V$ together and diffeomorphism 
$$x\colon\tilde V\to I\times]h(p)-\epsilon;h(p)+\epsilon[$$ 
for an open interval $I=]a;b[,$
such that $h\circ x^{-1}(I\times\{c\})=\{c\}$ for all $c\in]h(p)-\epsilon;h(p)+\epsilon[.$ We may assume that  $x$ can be extend to a diffeomorphism defined on an open neighborhood of the closure
of $\tilde V.$ Let $$B_n:=S^2\setminus x^{-1}([a,b]\times[h(p)-\frac{1}{n};h(p)+\frac{1}{n}]).$$ Then $\lim_{n\to\infty}\vol(B_n)=\vol(S^2)$ and
its boundary 
$\gamma_n$ is a piecewise smooth, oriented and closed curve which consists of the points
$$\image\gamma_n=\del x^{-1}([a,b]\times[h(p)-\frac{1}{n};h(p)+\frac{1}{n}]).$$

With $c_n:=h(p)-\frac{1}{n}$ and $d_n:=h(p)+\frac{1}{n}$ we get 
\begin{equation*}
\begin{split}
\lim_{n\to\infty}\int_{\del B_n}h\ed f=&-\lim_{n\to\infty}\int_{\gamma_n}h\ed f\\
=&-\lim_{n\to\infty}(c_n(f(x^{-1}(b,c_n))-f(x^{-1}(a,c_n)))\\
&+ d_n(f(x^{-1}(b,d_n))-f(x^{-1}(a,d_n))))\\
&-\lim_{n\to\infty}\int_{x^{-1}(\{b\}\times[c_n;d_n])}h\ed f\\
&+\lim_{n\to\infty}\int_{x^{-1}(\{a\}\times[c_n;d_n])}h\ed f=0\\
\end{split}
\end{equation*}
because $f$ is bounded on $S^2\subset\tilde V.$
\end{proof}
\begin{Cor}\label{inth3}
Let $g$ be the conformally flat metric on $S^3$ given by \ref{spme}. Then
$$\int_{S^3}H^3\vol=16\pi^2.$$
\end{Cor}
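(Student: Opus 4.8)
The plan is to read off $\int_{S^3}H^3\vol$ directly from the Chern--Simons formula of Proposition \ref{chsi}, namely $8\pi^2 CS(S^3,g,s)=\int_{S^3}(\frac{4K}{H}-\frac12 H^3)\vol$. Solving this for $\int H^3\vol$ needs just two inputs: the value of the curvature integral $\int_{S^3}\frac{4K}{H}\vol$, and the value of the left-hand side $8\pi^2 CS(S^3,g,s)$.

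First I would compute the curvature integral by Fubini along the fibration. Since $(X,Y,T)$ is orthonormal one has $\vol_{S^3}=\pi^*\vol_{S^2}\wedge\theta_3$, and $K$ is constant on the fibers, so $\int_{S^3}\frac{K}{H}\vol_{S^3}=\int_{S^2}K\bigl(\int_{\pi^{-1}(q)}\frac1H\theta_3\bigr)\vol_{S^2}$. By the normalization fixed in \ref{spme} the inner integral equals $\pi$, and Gauss--Bonnet gives $\int_{S^2}K\vol_{S^2}=4\pi$; hence $\int_{S^3}\frac{4K}{H}\vol=16\pi^2$. This step uses only \ref{intvol} and Gauss--Bonnet and is valid for every metric produced by \ref{spme}, so no constancy of $H$ is assumed.

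Next I would pin down $8\pi^2 CS$. The crucial point is that, for a fixed homotopy type of the section $s$, the real number $CS(S^3,g,s)$ is a conformal invariant, i.e.\ depends only on $[g]$, by the invariance theorem in \ref{csf}. Since $g$ is conformally flat and $S^3$ is compact and simply connected, $(S^3,[g])$ is conformally diffeomorphic to the round sphere; and since the horizontal distribution has positive curvature, the adapted frame $s=(X,Y,T)$ lies in the Hopf homotopy class, by the degree $\pm2$ characterization of \ref{updown}. Therefore $CS(S^3,g,s)$ equals its value on the round sphere carrying the standard Hopf frame, where everything is constant: the normalization of \ref{intvol} forces the base to have area $\pi$, whence $K\equiv 4$; equation \ref{laph2} then gives $H^2=K$, i.e.\ $H\equiv 2$; and $\int\frac1H\vol=\pi^2$ gives $\vol(S^3)=2\pi^2$. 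Feeding these into Proposition \ref{chsi} yields $8\pi^2 CS=16\pi^2-\frac12\cdot 8\cdot 2\pi^2=8\pi^2$.

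Combining the two computations, $8\pi^2=16\pi^2-\frac12\int_{S^3}H^3\vol$, so $\int_{S^3}H^3\vol=16\pi^2$. I expect the genuine obstacle to be the middle paragraph: transporting the Chern--Simons number to the round model. This requires both that a conformally flat structure on $S^3$ is conformally the round one (so that the conformal invariance of $CS$ can be applied even though the fibration $\pi$ is \emph{not} yet known to be Hopf), and that $s$ sits in the Hopf homotopy class (so that it is the homotopy-dependent \emph{real} value, not merely the class mod $\Z$, that we are entitled to compute on the model). The Fubini and Gauss--Bonnet computations, and the constant-curvature evaluation, are then routine.
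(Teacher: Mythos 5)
Your proposal is correct and follows essentially the same route as the paper: evaluate $\int_{S^3}\frac{4K}{H}\vol=16\pi^2$ by Fubini and Gauss--Bonnet, and pin down $CS(S^3,g,s)=1$ by transporting it to the round Hopf model ($\tilde H=2$, $\tilde K=4$, $\vol(S^3)=2\pi^2$) via conformal invariance and the homotopy of the adapted frames, then solve for $\int_{S^3}H^3\vol$. The only difference is one of exposition: you spell out the step the paper compresses into ``the induced frames are homotopic with the same Chern--Simons functional,'' namely that conformal flatness of the simply connected $S^3$ identifies $[g]$ with the round conformal class and that Lemma \ref{updown} places $s$ in the Hopf frame's homotopy class.
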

\begin{proof}
As $H>0,$ $\pi$ is homotopic to the Hopf fibration, and the induced frames are homotopic with the same Chern-Simons functional. But in case of the Hopf fibration it is easy to compute the Chern-Simons 
functional for the induced section $\tilde s=(\bar J,\bar K,\bar I)\in\SO(S^3):$
One easily checks $\tilde H=2,$ $\tilde K=4$ and $\int_{S^3}\vol_{S^3}=2\pi^2,$ thus
$$CS(S^3,g_{round},\tilde s)=1.$$
With \ref{chsi} and
$$\int_{S^3}\frac{K}{H}\vol=\int_{S^2}\pi K\vol_{S^2}=4\pi^2,$$
one obtains 
$$\int_{S^3}H^3\vol=16\pi^2.$$
\end{proof}
We are now able to classify submersive harmonic morphisms from a conformally flat $S^3$ with nowhere vanishing horizontal curvature.
\begin{The}\label{uniquen}
Let $\pi\colon (S^3,g)\to (S^2,h)$ be a submersive harmonic morphism
of a conformally flat $(S^3,g).$ Assume that the curvature of the horizontal distribution is nowhere vanishing. Then $g$ is the round metric and $\pi$ is, up to isometries of $S^3,$ the Hopf fibration.
\end{The}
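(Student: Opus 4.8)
The plan is to turn the three facts already in hand---$\int_{S^3}\tfrac1H\vol=\pi^2$ from \ref{intvol}, $\int_{S^3}H^3\vol=16\pi^2$ from \ref{inth3}, and the conformal-flatness identity \ref{laph2}---into the statement that $H$ equals the constant $2$ and $K$ the constant $4$, and then to recognize the resulting geometry as the round sphere with its Hopf fibration. As a preliminary I would record one further integral, exactly as in the proof of \ref{inth3}: by Fubini, the fiberwise normalization $\int_{\pi^{-1}(q)}\tfrac1H\theta_3=\pi$ of \ref{spme}, and Gauss--Bonnet on $S^2$, one has $\int_{S^3}\tfrac{K}{H}\vol=\pi\int_{S^2}K\,\vol_{S^2}=4\pi^2$.

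Next I would produce a lower bound for $\int_{S^3}H\vol$. Rewrite \ref{laph2} (legitimate since $H>0$) as $\Delta^h\log H=\tfrac12(K-H^2)$, recall $\Delta^h=-\dive\grad^h$, and integrate this equation over the \emph{closed} manifold $S^3$ against the weight $\tfrac1H$. Integrating by parts (there is no boundary) and using $\grad H\cdot\grad^h H=\|\grad^h H\|^2$, the left side becomes $-\int_{S^3}\tfrac{\|\grad^h H\|^2}{H^3}\vol$ while the right side becomes $\tfrac12\int_{S^3}\tfrac{K}{H}\vol-\tfrac12\int_{S^3}H\vol=2\pi^2-\tfrac12\int_{S^3}H\vol$. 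Rearranged, this reads $\int_{S^3}H\vol=4\pi^2+2\int_{S^3}\tfrac{\|\grad^h H\|^2}{H^3}\vol\ge 4\pi^2$, with equality if and only if $\grad^h H\equiv 0$.

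The matching upper bound comes from Cauchy--Schwarz applied to $f=H^{3/2}$ and $g=H^{-1/2}$: with \ref{intvol} and \ref{inth3} it gives $\big(\int_{S^3}H\vol\big)^2\le \int_{S^3}H^3\vol\cdot\int_{S^3}\tfrac1H\vol=16\pi^4$, hence $\int_{S^3}H\vol\le 4\pi^2$. Sandwiching forces $\int_{S^3}H\vol=4\pi^2$ together with equality in Cauchy--Schwarz; since $H>0$ this means $H^{3/2}$ and $H^{-1/2}$ are proportional, so $H$ is constant on all of $S^3$. Substituting $H\equiv\mathrm{const}$ back into \ref{intvol} and \ref{inth3} gives $H^4=16$, so $H\equiv 2$ and $\vol(S^3)=2\pi^2$; then \ref{laph2} immediately yields $K\equiv H^2=4$ and $\nabla_TT=-\grad^h\log H\equiv 0$.

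Finally I would feed $H\equiv 2$, $K\equiv 4$, $\nabla_TT\equiv 0$ into \ref{s2} to find $S(T,T)=S(X,X)=\tfrac12$ and $S(T,X)=0$, that is $S=\tfrac12\,g$. Because in dimension three $R=-S\cdot g$, a Schouten tensor proportional to the metric means constant sectional curvature, and the value $S=\tfrac12\,g$ is precisely that of the round unit $S^3$ (consistent with the Hopf values $\tilde H=2,\tilde K=4$ used in \ref{inth3}); hence $g$ is the round metric, and $\pi$ is a Riemannian submersion of the round sphere onto $S^2$ with totally geodesic circle fibers, which by the classification of harmonic morphisms on space forms (\cite{BW}) is the Hopf fibration up to an isometry of $S^3$. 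I expect the crux to be the two bounds on $\int_{S^3}H\vol$: the delicate point is integrating the conformal-flatness identity against exactly the weight $\tfrac1H$, so that its right-hand side reproduces the known integral $\int_{S^3}\tfrac{K}{H}\vol=4\pi^2$ and its left-hand side becomes a manifestly nonnegative defect, whereupon Cauchy--Schwarz supplies the reverse inequality and its equality case collapses $H$ to a constant; everything afterwards is a short computation.
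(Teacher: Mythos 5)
Your proposal is correct and follows the paper's own strategy essentially line for line: the same three integrals ($\int_{S^3}\frac1H\vol=\pi^2$ from \ref{intvol}, $\int_{S^3}H^3\vol=16\pi^2$ from \ref{inth3}, and $\int_{S^3}\frac KH\vol=4\pi^2$ via Fubini and Gauss--Bonnet), the same integration of \ref{laph2} against the weight $\frac1H$ (the paper phrases this as dividing the equation by $H^2$ before integrating, which is the identical computation, producing the same defect term $-2\int\frac{\|\grad^hH\|^2}{H^3}\vol$), and the same Cauchy--Schwarz estimate (the paper writes it with respect to the measure $\mu=\frac1H\vol$, you with the pair $H^{3/2},H^{-1/2}$ --- the same inequality), sandwiching $\int_{S^3}H\vol=4\pi^2$.

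The one place you genuinely deviate --- and in fact streamline the argument --- is the equality case. The paper uses equality in the \emph{first} inequality, which yields only $\grad^hH=0$; since this does not exclude variation of $H$ along the fibers, the paper then needs a separate geometric step: if $\grad_pH\neq0$ at some point, $\grad H$ would be vertical there, so the level sets of $H$ near $p$ would be integral surfaces of the horizontal distribution $\mH$, contradicting its non-integrability (measured by $H>0$). You instead invoke the equality case of Cauchy--Schwarz, which forces $H^{3/2}$ and $H^{-1/2}$ to be proportional, hence $H^2$ constant, with no geometric input at all. This is legitimate, because the sandwich forces equality in \emph{both} inequalities simultaneously, so either equality case may be harvested; yours is shorter and sidesteps the subtlest step of the paper's proof. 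Your endgame ($H\equiv2$, $K\equiv4$, $\nabla_TT\equiv0$, $S=\frac12g$, hence constant curvature $1$ and geodesic circle fibers, then the space-form classification of \cite{BW}) is exactly what the paper asserts, only spelled out in more detail.
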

\begin{proof}
Again we can assume that $H>0.$
As the metric $g$ on $S^3$ is conformally flat, we can use \ref{laph2}: 
$$0=2H\Delta^h\log H+H(H^2-K).$$ By dividing this equation by $H^2$ and then integrating it, we obtain
\begin{equation*}
\begin{split}
0=&\int_{S^3}\frac{1}{H^2}(2H\Delta^h\log H+H(H^2-K))\vol_{S^3}\\
=&\int_{S^3}(\frac{2}{H}\Delta^h\log H+H-\frac{K}{H})\vol_{S^3}\\
=&-\int_{S^3}\frac{2}{H^3}\parallel\grad^hH\parallel^2\vol_{S^3}+\int_{S^3}H\vol_{S^3}-4\pi^2.
\end{split}
\end{equation*}
This shows that $$\int_{S^3}H\vol \geq 4\pi^2$$ with equality if and only if
$\grad^hH=0.$\\

Consider the measure $\mu=\frac{1}{H}\vol$ on $S^3.$ Then the Cauchy-Schwartz inequality, 
\ref{intvol} and \ref{inth3} give us
\begin{equation*}
\begin{split}
\int_{S^3}H\vol=&\int_{S^3}H^2\mu\leq(\int_{S^3}H^4\mu)^{\frac{1}{2}}
(\int_{S^3}1^2\mu)^{\frac{1}{2}}\\
=&(\int_{S^3}H^3\vol)^{\frac{1}{2}}
(\int_{S^3}\frac{1}{H}\vol)^{\frac{1}{2}}=\sqrt{16\pi^2}\sqrt{\pi^2}=4\pi^2.
\end{split}
\end{equation*}
Therefore, $\int_{S^3}H=4\pi^2$ and $\grad^hH=0.$ If $\grad_p H\neq0$ for
a point $p\in S^3,$ the level sets of $H$ near $p$ would be integral curves of the horizontal distribution $\mH$ which is a contradiction to the non-integrability of the horizontal distribution $\mH$ measured by $H>0.$ Thus, 
$H$ is constant and $\nabla_TT=0.$ Using \ref{laph2}, \ref{intvol}
and \ref{s2} one obtains that $g$ is of constant curvature and the fibers are circles.
The theorem then follows
from \cite{BW} or \cite{He3}.
\end{proof}
We have the following Corollary
\begin{Cor}
Let $P=L(d;1),$ $d\neq0,$ be a lens space and $\pi\colon P\to S^2$ be a submersion. Let $g$ be
a conformally flat metric on $P$ such that $\pi$ is a harmonic morphism. Assume that the horizontal distribution has nowhere vanishing curvature. Then $g$ is of constant curvature, and $\pi$ factorizes the Hopf fibration
(up to isometries).
\end{Cor}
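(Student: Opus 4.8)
The plan is to lift the whole situation to the universal cover and reduce to Theorem \ref{uniquen}. Since $P=L(d;1)$ has fundamental group $\Z_d$ and universal cover $S^3$, let $q\colon S^3\to P$ be the universal covering map and set $\tilde g:=q^*g$. Then $q\colon (S^3,\tilde g)\to (P,g)$ is a Riemannian covering, so $q$ is a local isometry and the deck transformation group $\Z_d$ acts on $(S^3,\tilde g)$ by isometries.

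First I would verify that every hypothesis of Theorem \ref{uniquen} is inherited by $(S^3,\tilde g)$ and $\tilde\pi:=\pi\circ q$. Conformal flatness is a local condition (equivalently $\ed^\nabla S=0$), so $\tilde g$ is conformally flat because $q$ is a local isometry. Being harmonic and horizontally conformal are likewise local, so by the Fuglede--Ishihara characterization $\tilde\pi\colon (S^3,\tilde g)\to (S^2,[h])$ is again a submersive harmonic morphism; it is surjective since $q$ and $\pi$ are. Finally the horizontal distribution of $\tilde\pi$ is the $q$-pullback of that of $\pi$, so its curvature function satisfies $\tilde H=H\circ q$ and is nowhere vanishing. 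Hence Theorem \ref{uniquen} applies and yields that $\tilde g$ is the round metric and that $\tilde\pi$ is, up to an isometry of $S^3$, the Hopf fibration.

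Next I would descend the conclusion. Since $\Z_d$ acts by isometries of the round metric $\tilde g$ and $g$ is the quotient metric $\tilde g/\Z_d$, the metric $g$ has constant curvature. For the map, every deck transformation $\sigma$ satisfies $q\circ\sigma=q$, hence $\tilde\pi\circ\sigma=\pi\circ q\circ\sigma=\pi\circ q=\tilde\pi$; thus each $\sigma$ permutes the fibers of $\tilde\pi$ while inducing the identity on $S^2$. After the isometric identification of $\tilde\pi$ with the Hopf fibration, the $\sigma$ are therefore isometries of the round $S^3$ preserving each Hopf fiber and acting freely, so they lie in the Hopf circle action. The $\Z_d$-quotient of the Hopf fibration is then well defined, the identification $P=L(d;1)$ is consistent with $\Z_d$ being the standard cyclic subgroup of the Hopf circle, and the factorization $\tilde\pi=\pi\circ q$ exhibits $\pi$ as a factor of the Hopf fibration, as claimed.

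I expect the only step requiring genuine care to be this last one: checking that the free $\Z_d$-action fixing each Hopf fiber is actually realized inside the Hopf $S^1$, so that the descended map is literally the Hopf quotient onto $S^2$ and not merely some submersion of $L(d;1)$. Everything else is a routine transfer of the hypotheses and of the conclusion of Theorem \ref{uniquen} across the local isometry $q$.
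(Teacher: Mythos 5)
Your proposal is correct and follows essentially the same route as the paper: lift the metric and the map to the universal cover $S^3$, check that all hypotheses of Theorem \ref{uniquen} are local and hence inherited, apply that theorem, and descend. The paper's own proof is just a two-sentence version of this; your added verification that the deck transformations sit inside the Hopf circle action (so that $\pi$ really factorizes the Hopf fibration) fills in a detail the paper leaves implicit.
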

\begin{proof}
The universal covering of $P$ is $S^3.$ Therefore theorem \ref{uniquen} shows that
the composition of the covering map and $\pi$ is, up to isometries, the Hopf fibration.
\end{proof}


\begin{thebibliography}{10}


 
 \bibitem[BE]{BE} Baird, P., and Eells, J.,
 {\em A conervation law for harmonic maps},
 Lecture Notes in Mathematics, Vol. 894, Springer Berlin, 1981.

 \bibitem[BW]{BW} Baird, P., and Wood, J. C.,
 {\em Harmonic Morphisms Between Riemannian Manifolds},
 Oxford Science Publications, New York, 2003.
 
 \bibitem[BW2]{BW2} Baird, P., and Wood, J. C.,
{\em Harmonic morphisms, Seifert fibre spaces and conformal foliations}, 
Proc. London Math. Soc. , 64, 1992.

\bibitem[BW3]{BW3} Baird, P., and Wood, J. C.,
{\em The geometry of a pair of Riemannian foliations by geodesics
and associated harmonic morphisms}, Bull. Soc. Math. Belg., Ser. B 44, 1992.
 
 \bibitem[B]{B} Besse, A. 
\emph{Einstein Manifolds}, Vieweg, Braunschweig, 1988.

\bibitem[Br]{Br} Bryant, R. {\em Harmonic morphisms with fibers of dimension one}, Communications in Analysis and Geometry, Vol. 8, No. 2, 2000.

\bibitem[Ch]{Ch} Chern, S. S.,
{\em Complex Manifolds without Potential Theory},
 Springer Verlag, Berlin, 1979.

\bibitem[CS]{CS} Chern, S. S., and Simons, J.,
{\em Characteristic forms and geometric invariants},
The Annals of Mathematics, 2nd Ser., Vol.99, No. 1., 1974. 



\bibitem[EW]{EW}
Eells, J., and Wood, J.C., {\em Harmonic maps from surfaces into projective
  spaces}, Adv. in Math. 49, 1983.

\bibitem[Fu]{Fu}
Fuglede, B., {\em Harmonic morphisms between Riemannian manifolds}, Ann. Inst. Fourier 28, Grenoble, 1978.


\bibitem[GHL]{GHL} Gallot, S., Hulin, D., and Lafontaine, J.
 {\em Riemannian Geometry},
Springer-Verlag, Berlin, 1980.


\bibitem[GLP]{GLP} Gilkey, P., Leahy, J. V., and Park, J.,
 {\em Spectral Geometry, Riemannian Submersions, and the Gromov-Lawson Conjecture},
CRC Press, 1999.

\bibitem[He1]{He1} Heller, S., {\em Conformal Submersions of $S^3$}, 
Dissertation, Humboldt-Universit{\"a}t zu Berlin, 2007.

\bibitem[He2]{He2} Heller, S., {\em Conformally Flat Circle Bundles over Surfaces}, Preprint.

\bibitem[He3]{He3} Heller, S., {\em Conformal Submersions of $S^3$ by Circles}, Preprint.

\bibitem[HJ]{HJ}
Hertrich-Jeromin, U., {\em Introduction to M\"obius Differential Geometry}, London Mathematical Society, Lecture Note Series 300, 2003.

\bibitem[H]{H} Hopf, H., {\em \"Uber die Abbildungen der dreidimensionalen Sph\"are auf die Kugelfl\"ache},
Mathematische Annalen 104, 1931.

\bibitem[Hoe]{Hoe} H{\"o}rmander, L., {\em,Hypoelliptic differential operators}, Annales de l'institut Fourier, 11, 1961.

\bibitem[Is]{Is}
Ishihara, T., {\em A mapping of Riemannian manifolds which preserves harmonic functions}, J. Math. Koyoto Univ., 19, 1979.

\bibitem[J]{J} Jost, J. 
 {\em Riemannian Geometry and Geometric Analysis},
Springer-Verlag, Berlin, 1980.




\bibitem[KN]{KN}           
Kobayashi, S., and K. Nomizu,  {\em Foundations of differential  
geometry},  vol 1. Wiley--Interscience, New York, 
 1963.
 
 \bibitem[KP]{KP} Kulkarni, R., and Pinkall, U., 
{\em Conformal Geometry}, Vieweg, Braunschweig, 1988. 

\bibitem[LM]{LM} Lawson, H.B., and Michelsohn, M.L., 
{\em Spin Geometry}, Princeton, 1990.

\bibitem[P]{P} Pantilie, R.{\em Harmonic morphisms with one-dimensional fibres on conformally-flat Riemannian manifolds}, arXiv:math/0610361.



\end{thebibliography}
\end{document}